\documentclass[11pt,twoside]{amsart}


\usepackage{amssymb, palatino,graphicx, hyperref}
\usepackage[normalem]{ulem}
\usepackage[all]{xy}
\usepackage{color}
\usepackage{lmodern}
\usepackage{slantsc}
\usepackage{ifthen}

\if@titlepage
  \newenvironment{resume}{%
      \titlepage
      \null\vfil
      \@beginparpenalty\@lowpenalty
        \bfseries \resumename
                \@endparpenalty\@M
      \end{center}}%
     {\par\vfil\null\endtitlepage}
\else
  
\fi

\newcommand\resumename{R\'esum\'e}



\addtolength{\textwidth}{0.8cm}
\addtolength{\textheight}{0.25cm}
\addtolength{\hoffset}{-0.4cm}


\voffset=-2mm

\graphicspath{{./}{./Figures/}}


\newtheorem{dfn}{Definition}[section]
\newtheorem{thm}[dfn]{Theorem}
\newtheorem{lem}[dfn]{Lemma}
\newtheorem{prop}[dfn]{Proposition}
\newtheorem{rem}[dfn]{Remark}

\def\inv{^{-1}}

\def\P{\ensuremath{\mathbb P}}
\def\Q{\ensuremath{\mathbb Q}}
\def\R{\ensuremath{\mathbb R}}
\def\Z{\ensuremath{\mathbb Z}}

\def\C{\ensuremath{\mathbb C}}

\def\a{\mathfrak{a}}
\def\so{\mathfrak{so}}

\def\CP1{\ensuremath{\mathbb C \mathbb P^1}}

\def\Pn-1{\ensuremath{\P^{n-1}}}
\def\Ea{\E_{\alpha}}

\def\cf{{\em cf. }}

\newcommand{\mysmallmatrix}[1]{\ensuremath{
                            \bigl[ \begin{smallmatrix}
#1
\end{smallmatrix} \bigr]}}

\newcommand{\applic}[5]{
  \ensuremath{\begin{array}[t]{c@{}c@{}c@{}l}
           #1:\ & #2 & \rightarrow & #3 \\
               & #4 & \mapsto     & #5
         \end{array}
        }
}

\newcommand{\applicnn}[4]{
  \ensuremath{\begin{array}[t]{c@{}c@{}l}
           #1 & \longrightarrow & #2 \\
           #3 & \longmapsto     & #4
         \end{array}
        }
                        }

\newcommand{\setofst}[2]
{ \ensuremath{ \left\{\, #1\ \left\vert\ #2 \right.\right\}
            }
}

\newcommand{\bvect}[1]{ {\scriptstyle{
\left[
\begin{array}{c}
#1
\end{array}
\right]               }}}

\newcommand{\horiz}{\ensuremath $\rotatebox{90}{$\vert$}$}

\definecolor{red}{rgb}{.6,0,0}
\definecolor{green}{rgb}{0,.6,0}
\definecolor{darkgreen}{rgb}{0,0.3,0}
\definecolor{purple}{rgb}{0.5,0,0.5}
\definecolor{darkblue}{rgb}{0,0,0.7}
\definecolor{greenblue}{rgb}{0,0.4,0.5}
\definecolor{yellow}{rgb}{0.8,0.5,0}

\newboolean{draft}

\newboolean{draftarxiv}

\newboolean{craft}

\newcommand{\cmt}[1]
{\ifthenelse {\boolean{draft}}
{}{}}

\newcommand{\rev}[1]
{\ifthenelse {\boolean{draft}}
{{\color{green} #1}}
{#1}}

\newcommand{\newbb}[1]
{\ifthenelse {\boolean{craft}}
{{\color{darkblue} #1}}
{#1}}

\newcommand{\newbbb}[1]
{\ifthenelse {\boolean{draft}}
{{\color{greenblue} #1}}
{#1}}

\newcommand{\nopost}[1]
{\ifthenelse {\boolean{draft}}
{{\color{cyan} #1}}
{}}


\newcommand{\maynopost}[1]
{\ifthenelse {\boolean{craft}}
{{\color{purple} #1}}
{}}

\newcommand{\margincmt}[1]
{\ifthenelse {\boolean{draft}}
{\marginpar{{\sc \tiny \color{red} #1}}}
{}}
\newcommand{\inred}[1]
{\ifthenelse{\boolean{draft}}{{\color{red} #1}}{#1}}

\newcommand{\new}[1]
{\ifthenelse {\boolean{draft}}
{{\color{green} #1}}
{#1}}

\newcommand{\neww}[1]
{\ifthenelse {\boolean{draft}}
{{\color{darkgreen} #1}}
{#1}}

\newcommand{\newb}[1]
{\ifthenelse {\boolean{draftarxiv}}
{{\color{blue} #1}}
{#1}}

\newcommand{\del}[1]
{\ifthenelse {\boolean{draft}}
{{\color{red} #1}}
{}}

\newboolean{details_on}

\newcommand{\details}[1]
{\ifthenelse {\boolean{details_on}}
{{\color{darkgreen} \tiny #1}}
{}}


\title{Foliations modeling nonrational simplicial toric varieties}
\author{Fiammetta Battaglia and Dan Zaffran}
\thanks{The second named author gratefully acknowledges the support of the Korea Advanced Institute of Science and Technology and of the Basic Science Research Program through the National Research Foundation of Korea (NRF) funded by the Korean Ministry of Education, Science and Technology (Grant No. 2010-0005879)
}

\newcommand{\vu}{\ensuremath{ \underline{u} } }

\newcommand{\vv}{\ensuremath{ \underline{v} } }
\newcommand{\vx}{\ensuremath{ \underline{x} } }
\newcommand{\vy}{\ensuremath{ \underline{y} } }
\newcommand\vt{\ensuremath{\underline{t}}}
\newcommand\vs{\ensuremath{\underline{s}}}
\newcommand\vz{\ensuremath{\underline{z}}}

\newcommand\vgamma{\ensuremath{\underline{\gamma}}}

\newcommand{\bT}{\ensuremath{\mathcal{T} } } 
\newcommand\bE{\ensuremath{\mathcal E}} 
\newcommand\E{\ensuremath{\mathcal E}}
\newcommand{\calF}{\ensuremath{ \mathcal{F} } }
\newcommand{\calG}{\ensuremath{ \mathcal{G} } }

\newcommand{\calC}{\ensuremath{ \mathcal{C} } }

\newcommand{\calH}{\ensuremath{ \mathcal{H} } }

\newcommand{\lr}[1]{\ensuremath{ \Lambda^\R_{#1} } }

\newcommand\expo[1]{\ensuremath{e^{2\pi i #1}}}
\DeclareMathOperator{\Rel}{Rel}

\setboolean{draft}{true}
\setboolean{details_on}{true}
\setboolean{draft}{false}
\setboolean{details_on}{false}


\setboolean{craft}{true}
\setboolean{craft}{false}

\setboolean{draftarxiv}{false}

\begin{document}
\maketitle
\begin{abstract}
We establish a correspondence between simplicial fans, not necessarily rational, and certain foliated compact complex manifolds called LVMB-manifolds. In the rational case, Meersseman and Verjovsky have shown that the leaf space is the usual toric variety.
We compute the basic Betti numbers of the foliation for shellable fans. When the fan is in particular polytopal,
we prove that the basic cohomology of the foliation is generated in degree two. 
\new{We give} evidence that the rich interplay between convex and algebraic geometries \new{embodied by toric varieties} carries over to our nonrational construction. 
In fact, \new{our approach unifies} rational and nonrational cases.\end{abstract}

\section{Introduction}
\subsection*{Rational convex polytopes and toric varieties}
The correspondence between rational convex polytopes and projective toric varieties is well-known. 
It pertains to several fields, including combinatorics, convex geometry, symplectic geometry, algebraic geometry.

Within this picture, simple rational polytopes correspond to toric varieties that are rationally smooth 
(i.e., having at most orbifold singularities). We will only consider this restricted correspondence, which has long been known to provide fruitful links 
between the fields listed above \cite{D,Gr,Kho,Tei,S}.

On the other hand, simple 
polytopes come in continuous families (by perturbing the facets' directions), 
whereas toric varieties, dubbed ``frigid crystals'' 
in \cite{Da}, do not. The reason is that no toric variety corresponds to a nonrational polytope. 
A solution to this problem was given by Prato in \cite{p}, by introducing a generalization of toric orbifolds 
which are non Hausdorff when the polytope is nonrational.

Relying on works by Meersseman and Verjovsky \cite{M,MV}, and by Prato \cite{p}
(and also on \cite{Bos,LN,ldm,CZ}),
we take a new approach by realizing the toric space corresponding to any simple convex 
polytope as the leaf space of a smooth foliation. This simultaneously provides an object in the nonrational case and removes 
all singularities. Even though we are 
in principle interested in the leaf space, we lift all statements and proofs to the level of the foliation, 
where everything is smooth and Hausdorff. 
\subsection*{A generalized correspondence.}
A more accurate formulation of the above correspondence is made in terms of fans: 
recall that to any convex polytope $P$, we can associate its normal fan 
$\Delta$, which is complete and polytopal. 
The map $P \mapsto X$ taking a rational simple convex polytope $P$ 
to a toric variety $X$ factors out as $P\mapsto\Delta\mapsto X$.
The first map is non-injective and the second one
is the classical one-to-one correspondence between 
complete simplicial rational polytopal fans 
and rationally smooth projective toric varieties (these objects beeing seen up to isomorphism).

We propose a four-way generalization of the correspondence $\Delta \mapsto X$. 
The roles of varieties and fans will be played, respectively, by  
{\em leaf spaces of} foliations on so-called LVMB-manifolds 
and suitable {\em triangulated vector configurations}.

Notice that a rational fan $\Delta$ determines implicitly a vector configuration $V$ on a lattice. 
Namely, $V$ is the set of primitive generators of the fan's rays. We will stress the importance of $V$ and its 
generalizations, to which we give the main role on the convex geometric side. 

The four generalizations are, in increasing order of importance from our point of view: 
\begin{enumerate}
\item We allow non-polytopal fans. This translates into using nonregular triangulations (\cf \ref{preliminaries}). 
\item We allow orbifold multiplicities. This amounts to taking nonprimitive generators on rays.
\item We have more generators than rays: some vectors in the configuration may not correspond to a ray. 
\item We do not require rationality of the configuration. This means dropping the closedness condition of the lattice.
\end{enumerate}
Each of these generalizations has already appeared in the literature. 
We will simultaneously refine, generalize or desingularize several known constructions, 
thus giving a unified picture of the smooth, orbifold and nonrational cases. 
\subsection*{Related works}
We give a simplified account of earlier constructions. Each starts from a convex-geometric object, 
and (using Gale duality) defines an algebraic- or  complex-geometric object. 
From now on, all fans (resp. polytopes) are simplicial (resp. simple). 
\subsubsection*{A rational fan.}
To each rational simplicial fan in a vector space $L\otimes_{\Z}\R$, with $L$ a lattice, 
there corresponds a rationally smooth toric variety $X$. 
\subsubsection*{A Delzant polytope (necessarily rational).}
On the symplectic side Delzant proves the existence of a unique symplectic toric manifold in correspondence 
to each Delzant polytope, i.e., whose normal fan satisfies suitable integrality conditions \cite{D}.
\subsubsection*{A rational polytope and multiplicities attached to facets 
 (equivalently, a rational polytopal fan with multiplicities attached to rays, 
 and a certain height function).}

\begin{enumerate}
\item {\em Symplectic orbifolds}.
Lerman and Tolman generalize Delzant's theorem to the class of symplectic toric orbifolds,
by allowing any rational convex simple polytope and rays generators that are 
not primitive \cite{lt}.\\ 
\item {\em Generalized Calabi-Eckmann fibrations I}.
Meersseman and Verjovsky prove in \cite{MV} that toric varieties and ``toric varieties with orbifold multiplicities'' 
can be viewed as leaf spaces of the foliations on {\em rational LVM-manifolds} \cite{M} (see below).
\end{enumerate}

\subsubsection*{A nonrational polytope, a quasilattice, and rays generators.} 
Prato generalizes the Delzant procedure to any simple convex polytope.
A key point is to replace the lattice with a {\em quasilattice}, i.e., 
a $\Z$-module in a vector space, generated by a finite spanning set. 
For a given simple convex polytope, different choices of a quasilattice and 
rays generators contained therein yield a family of symplectic spaces called {\em quasifolds}. 
When the polytope is rational, this family strictly contains the cases above.
When the quasilattice is not a lattice the corresponding spaces are non Hausdorff: 
nonrationality forces quotient singularities of finite type to become of ``infinite type'' \cite{p}. 
We shall refer to these spaces as {\em toric quasifolds}.
\subsubsection*{A nonrational polytope} 
Generalizing the works of L\'opez de Medrano and Verjovksy \cite{ldm}, and of L\oe b and Nicolau \cite{LN}, Meersseman \cite{M} constructs the so-called LVM-manifolds. It is formed by the compact complex foliated manifolds 
corresponding to our polytopal case. 
To each such manifold $N$, he associates a non necessarily rational polytope $P$; he establishes a one-to-one correspondence between the combinatorial type of $P$ and $N$ up to 
deformation \cite[Th.~13]{M}.
\subsubsection*{A(n implicit) nonrational fan} 
Generalizing Meersseman's construction, Bosio constructs the family 
of so-called LVMB-manifolds that we consider here (\cf \cite{Bos} and the interpretation in \cite{CZ}). 
\subsubsection*{A stacky fan (equivalently, a rational fan with multiplicities attached to rays).} 
\begin{enumerate}
\item {\em Generalized Calabi-Eckmann fibrations II}.
Tambour constructs and studies certain LVMB-manifolds in \cite{Tam}. 
He discusses the relationship with toric varieties. \\
\item {\em Stacks}. 
Another approach for handling the orbifold structure, and turning orbifolds into smooth objects, is to use stacks. 
We refer to Iwanari's article \cite{I} for this point of view, initiated by Borisov-Chen-Smith in \cite{BCS}. 
\end{enumerate}
\subsubsection*{A nonrational fan} 
Panov and Ustinovsky construct complex structures on moment-angle manifolds and their quotients by real tori in \cite{PU}. Under rationality assumptions, they discuss the relationship with toric varieties.

\medskip

Since we posted this article on the arXiv, several related results have appeared: Ishida \cite{ishida} discovered an interesting group-theoretic characterization of a class of manifolds strictly containing LVMB-manifolds. Ustinovsky showed in \cite{U} that Ishida's manifolds 
coincide with that of \cite{PU}. Another recent generalization was described 
by Battisti and Oeljeklaus \cite{BO}. 
Finally in the note \cite{KLMV} Katzarkov, Lupercio, Meersseman and Verjovsky investigate a different approach
to defining simplicial toric varieties in a nonrational setting. 
Their main technical tool is an extension of LVM theory to the nonrational case (i.e. they assume fans to be polytopal; \cf our Sect.~\ref{Reg Triang and Kahler}),
but they consider the leaf space from the non-commutative and diffeological (\cf \cite{IZ}) 
viewpoints.
\subsection*{Summary of our results}
We propose to encode all of the convex-geometric data needed for the construction of a 
space $X$ ---that is, a fan, a choice of a point on each ray and a (quasi)lattice
containing each of those points--- in a unique and well-studied object (\cf  \cite{DL-R-S}): a triangulated vector configuration $(V,\bT)$. Notice that a nonrational fan is not sufficient to determine a unique $X$. We develop a framework in which it is possible to obtain nonrational toric varieties
by means of LVMB-manifolds. In fact, we construct a well-defined map from $(V,\bT)$ to a complex-geometric object $X$ (the leaf space of an LVMB-manifold).  

We define in Sect.~\ref{vectorconfiguration} two integers $a$ and $b$ that are quantitative measures of the nonrationality of the configuration (the integer $a$ was defined in \cite{M}).
In correspondence to the configuration, we construct an LVMB-manifold $N$, endowed with a smooth 
holomorphic foliation $\calF$ whose topology depends on $a$ and $b$. 
The leaf space $X$ can be, in increasing generality, a smooth toric variety, a toric orbifold or a toric quasifold. 
In the latter cases, we see our smoothly foliated manifold $N$ as a desingularisation of the space $X$. In Sect.~\ref{examples} we include several fully worked out examples.

Beyond the introduction of $(V,\bT)$ as the main convex-geometric object, 
our main result is that, 
in both rational and nonrational cases, 
the cohomological study can be lifted to the foliation by using basic cohomology. In the case of a shellable fan, we compute the basic Betti numbers of $(N,\calF)$. 
In particular, we show that they only depend on the combinatorial type of the fan (Th.~\ref{prop betti numbers}).
When the fan is polytopal we prove that the basic cohomology algebra of $(N,\calF)$ is generated in degree two (Th.~\ref{generated in dim two}). 
In Sect.~\ref{Stanley} we show that our framework handles Stanley's proof of the necessity part
of the g-theorem, by applying El Kacimi's basic version
of the hard Lefschetz theorem \cite[3.4.7]{EK}.
Finally, in Sect.~\ref{Reg Triang and Kahler} we illustrate some specific features of the polytopal case. 

Throughout, we try to delineate the combinatorial, topological, and convex geometric aspects, 
each of which being of independent interest. On the convex-geometric side, we emphasize 
the relevance of methods such as triangulations, shellings and Gale duality. On the complex-geometric side, we explore and extend toric methods, giving evidence that at least part of the technology 
available with toric varieties carries over to our foliated model, 
which makes no distinction between rational and nonrational cases.  
LVMB-manifolds thus establish a tight link between convex geometry and complex geometry, and may also 
contribute to a more geometric understanding of the nonsimplicial nonrational case. 

\bigskip

\noindent {\small Acknowledgement: We would like to thank Dirk T\"oben for helpful conversations.}

\newpage

\section{Construction}
\subsection{Triangulated configurations}   
Let $E$ be an \R-vector space of dimension $d$.
\subsubsection{Vector configurations}\label{vectorconfiguration} A {\em vector configuration} 
$V=(v_1,\dots,v_n)$ is a finite, ordered list of vectors, allowing repetitions. 
We will assume that $\text{Span}_{\R}\{v_1,\dots,v_n\}=E$. 

Consider the space of linear relations among $v_{1},\dots, v_{n}$ 
$$\Rel(V):=\Big\{ c \in \R^{n} \Big\vert \sum_{1\leq j\leq n} c_{j} v_{j}=0\Big\},$$
which has dimension $n-d$.
We say that a real subspace of $\R^{n}$ is {\em rational} when it admits 
a real basis of vectors in $\Q^{n}$ (equivalently, $\Z^{n}$)
We define $a(V)$ as the 
dimension of the largest rational space contained in $\Rel(V)$, 
and $b(V)$ as the dimension of the smallest rational space 
containing $\Rel(V)$. Then $0\leq a(V) \leq n-d \leq b(V) \leq n$.

The configuration is called {\em rational} when $\Rel(V)$ is rational or, equivalently,         
$a(V)= n-d$ or $b(V)= n-d$. Otherwise $2+ a(V)\leq b(V)$, and all such values are possible. 
 
\subsubsection{Triangulations}\label{Triangulations}
Our main reference for triangulations and related concepts is the book \cite{DL-R-S}.
Let $\tau\subset\{1,\dots,n\}$. The {\em cone over} $\tau$ is defined as 
$\text{cone}(\tau)=\{\sum_{j\in\tau}\ \R_{\scriptscriptstyle{\geq 0}}\, v_{j} \}$.
By convention, $\text{cone}(\varnothing)=\{0_{E}\}$.
We say that $\tau$ 
is a {\em simplex} when the vectors indexed by $\tau$ are linearly independent 
(in particular, pairwise distinct).
A {\em simplicial cone} is a cone over a simplex. 

A {\em triangulation} $\bT$ of a configuration $V$ is a collection of simplices such that:
\begin{itemize}
\item If $\tau\in\bT$ and $\tau'\subset \tau$ then $\tau'\in\bT$;
\item For all $\tau,\tau' \in\bT$, $\text{cone}(\tau) \cap \text{cone}(\tau')=
 \text{cone}(\tau \cap \tau')$; 
\item $\cup_{\tau\in\bT}\ \text{cone}(\tau) \supset \text{cone}(V)$.
\end{itemize}
This definition allows that some vectors among $v_1,\dots, v_n$ 
do not belong to any simplex of $\bT$. We denote by $k\geq 0$ the number of such ``ghost vectors". 
We will always assume that they are at the end of the list $v_1,\dots, v_n$.
The pair $(V,\bT)$ is said to be a {\em triangulated configuration}. 

\subsubsection{Relations to other convex-geometric data}\label{convex geo data}
Suppose first that a triangulated configuration $(V,\bT)$ is given.

{\em Where is the fan?} The collection of cones on all of the simplices of 
$\bT$ is a {\em simplicial fan} $\Delta$, of dimension $d$. 
That is, a collection of simplicial cones such that: 
each nonempty face of a cone in $\Delta$ is a cone in $\Delta$; 
the intersection of any two cones in $\Delta$ is a face of each \cite{ziegler}. 
Notice that the fan $\Delta$ does not keep track of the ghost vectors and of the position of the other vectors on their respective rays. 
\rev{The non-ghost vectors play the role of generators of the rays of $\Delta$, as in \cite{PU}; they correspond to the vertices of the star-shaped simplicial sphere considered in \cite{Tam}.}

{\em Where is the polytope?} In general there is no relevant polytope associated to $(V,\bT)$. 
In the important special case of $\Delta$ being polytopal, there are infinitely many polytopes whose normal
fan is $\Delta$, all of the same combinatorial type. Some extra data is needed in order to 
determine a particular polytope.

{\em Where is the (generalized) lattice?} The $\Z$-submodule of $E$ 
generated by {\em all} the vectors 
$v_1,\dots,v_n$ is a quasilattice in $E$.
By {\em lattice} in $E$ we shall mean a 
quasilattice that is closed, equivalently of rank $d$.
The configuration $V$ being rational is equivalent to $Q$ being a lattice. 
It is well-known that fixing a rational fan but varying the lattice will change the associated toric variety. 
Analogously, starting from a triangulated configuration and modifying the quasilattice (by adding or deleting ghost 
vectors) will alter the geometry of the leaf space. This is exemplified in \ref{Nonrational CP1} which can be understood as a realization theorem, showing a substantial freedom in the construction even in dimension one.
\medskip

Conversely, assume given Prato's data of: a nonnecessarily rational simple polytope $P$ with $h$ facets; 
normal vectors $v_{1},\dots,v_{h}$; a quasilattice $Q$ containing these vectors. 
Choose $v_{h+1}, \dots,v_{n}$ such that $v_1,\dots,v_n$ generate $Q$. 
The vectors $v_{1},\dots,v_{h}$ generate the rays of 
the normal fan $\Delta$ of $P$. This fan determines a triangulation $\bT$
on $V=(v_{1},\dots,v_{n})$ with $v_{h+1}, \dots,v_{n}$ as ghost vectors. 

Actually some information is lost ---$P$ can't be recovered from $\Delta$---, 
but this information is not necessary to build the toric quasifold $X$ as a complex quotient \cite{cx}. 
As with toric varieties, the benefits of the symplectic reduction construction are an a priori symplectic/K\"ahler 
structure and compactness, whereas the advantages of the complex quotient are: an a priori complex structure; 
a generalization to the non polytopal case.
We will give more details later on how to encode and use that 
extra piece of information, that can exist only in the polytopal case.

Finally, starting from a stacky fan, we encode it in a similar way: we add ghost vectors to generate the 
ambient lattice, as in \cite{MV}.

\subsection{Construction of the LVMB-manifold $N$}\label{construction of N}
\subsubsection{Balanced and odd triangulations.}
\mbox{} Let $(V,\bT)$ be a triangulated vector configuration satisfying:
\begin{enumerate}
\item[({\it i})] $n-d=2m+1$ with $m$ a positive integer, 
\item[({\it ii})] $\sum v_i=0$.
\end{enumerate}
\rev{By ({\it ii}) and our assumption that $V$ spans the ambient space $E$, \new{the vectors of $V$ can not be contained in any half-space, so} cone$(V)=E$. Thus, the third defining property of triangulations (\cf \ref{Triangulations}) implies that} $\Delta$ is a complete fan. Conditions \rev{({\it i}) and ({\it ii})} are mild restrictions\rev{: starting with the weaker assumption that $(V,\bT)$ is a 
triangulated configuration whose associated fan is complete, we easily obtain ({\it i}) and ({\it ii})} while keeping both the 
quasilattice and the fan unchanged 
(this fact is used in [MV]). Namely, we apply the following algorithm:\\ 
Step 1. If  $\sum v_i \neq 0$, append $-\sum v_i$ as a new ghost vector of the configuration (and increase $n$ by $1$);\\
Step 2. If $n-d$ is even, append $0$ as a new ghost vector of the configuration (and increase $n$ by $1$);\\
Step 3. If $n-d=1$, append $0$ and $0$ as new ghost vectors of the configuration (and increase $n$ by $2$). 

\subsubsection{Virtual chamber and $U(\bT)$}\label{subsetsu} 
Denote the set of maximal simplices of $\bT$ by ${\{ \E_\alpha\}}_\alpha$.
Define the {\em virtual chamber} 
$\bE:={\big\{ \E_\alpha^{c} = \{1,\dots, n \} \setminus \E_\alpha \big\}}_\alpha$. By definition, virtual chambers correspond bijectively to triangulations of $V$ (\cf \cite{AS}).
For each $\alpha$, define $U_\alpha
:=\setofst{[z_1:\dots:z_n]\in \C\P^{n-1}}{ \forall j\in \E_\alpha^{c}, z_j\neq 0}$. 
Define $U(\bT):=\bigcup_\alpha U_\alpha$. 

\subsubsection{The dual configuration.}\label{The dual configuration}
Define a matrix $M\in \R^{n\times (2m+1)}$
by  
$$M=\begin{bmatrix}
1&a^1_1 & \dots & a^{2m}_1\\
  &           & \vdots \\
1&a^1_{n} & \dots & a^{2m}_{n}
\end{bmatrix},$$
where the columns form a basis of $\Rel(V)$.
Now define a vector configuration
$\hat{\Lambda}^\R=\big(\hat{\Lambda}^\R_1, \dots,\hat{\Lambda}^\R_{n}\big)$
 in $\R^{2m+1}$, called a {\em Gale dual} of $V=(v_1, \dots, v_{n})$, 
 and a configuration $\Lambda^\R=\big({\Lambda}^\R_1, \dots,{\Lambda}^\R_{n}\big)$
 in $\R^{2m}$ by  
$$
M=\begin{bmatrix}
\horiz\ \hat{\Lambda}^\R_1\ \horiz \\
      \vdots\ \  \\
\horiz\ \hat{\Lambda}^\R_{n}\ \horiz \\
\end{bmatrix}
=\begin{bmatrix}
1&\!\!\! \horiz \Lambda^\R_1 \horiz \\
  &      \vdots\ \ \ \  \\
1&\!\!\! \horiz \Lambda^\R_{n} \horiz \\
\end{bmatrix}.$$

Notice that $M$ is only defined up to right multiplication 
by a matrix of form $T=\mysmallmatrix{1&B \\0&A}$
where $B=(b_1,\ldots,b_{2m})\in\R^{2m}$ and $A\in GL(2m,\R)$. 
Therefore, a Gale dual is not unique, 
and $\Lambda^{\R}$ is only defined 
up to the invertible real affine transformation of the ambient  $\R^{2m}$ given by 
$X\mapsto XA+B$. Thus, $\Lambda^{\R}$ is to be seen as a configuration 
of {\em points}, i.e., affine objects. We refer to Sect. \ref{examples} 
for examples.

\rev{The quantitative measures of nonrationality of the dual configurations 
$V$ and $\hat{\Lambda}^\R$ are linked by the relations
$$a(V)+b(\hat{\Lambda}^\R)= n \quad\text{ and }\quad a(\hat{\Lambda}^\R)+ b(V)=n,$$ 
which follow from $\Rel(\hat{\Lambda}^\R)=\textrm{Ker }M^t=(\textrm{Im } M)^{\perp}= \Rel(V)^{\perp}$. 
We note also that $a\big(\hat{\Lambda}^\R\big)$
is denoted $a$ in \cite[Th. 4]{M}, where it is shown that the algebraic dimension of $N$ is at least $a$, with equality in the absence of ghost vectors.
}

\subsubsection{The $\C^{m}$-action and $N$.}\label{CmandN}\mbox{}
Consider the holomorphic $\C^m$-action on $U(\bT)$ defined by
\begin{equation}\label{action giving N}
\applicnn{\C^m\times U(\bT)}{\ U(\bT)}
{\Big( \vu\ ;\ [z_1:\dots:z_{n}] \Big)}
{\ [e^{2\pi i \Lambda_1(\vu)}z_1:\dots : e^{2\pi i \Lambda_{n}(\vu)} z_{n}],}
\end{equation}
where 
$$\Lambda_j := \bvect{a_j^1+i a_j^{m+1}\\ \vdots \\a_j^m + i a_j^{2m}} \in \C^{m} 
$$ 
with $a_j^1, \dots, a_j^{2m}$ denoting the entries of $\Lambda_j^\R$\new{, and 
$\Lambda_{j}(\vu)$ denotes the dot product.}

Bosio has given in \cite{Bos} sufficient conditions for this action to be proper and cocompact. 
We show below that action (\ref{action giving N}) is free and Bosio's conditions hold, thus the quotient of 
$U(\bT)$ by this action is a compact complex manifold that we denote $N$. 
\rev{Note that acting on $V$ by a linear automophism of $E$ is immaterial for the construction we have described, since $\Rel(V)$ is unchanged by such a transformation.}
\rev{We refer to \cite{Bos, M} for properties of $N$, and note here that the standard holomorphic $(\C^*)^n$-action on $\C P^{n-1}$ induces a {\em decomposition} of $N$ (\cf \cite[p.~36]{gmp}): 
define $N(\tau)\subset N$ as the image in $N$ of 
$\setofst{[\vz] \in U(\bT) }{\ z_j \neq0\;\text{iff}\; j\not\in\tau}$.
Then $N$ is the disjoint union of the $(\C^*)^{n}$-orbits 
$\coprod_{\tau\in\bT}N(\tau)$, with a unique open orbit $N(\varnothing)$. 
}

\subsubsection{Proof that Bosio's conditions hold}  \label{Proof that Bosio's conditions hold}
By properties of Gale duality (see \cite{DL-R-S} Def. 5.4.3 and the comment below), for each 
$\alpha, \setofst{\hat{\Lambda}^\R_j}{j\in\E_\alpha^c}$ is 
a simplex, i.e., a linear basis of 
$\R^{2m+1}$. 
Let $P_\alpha$ denote 
the convex hull of 
$\setofst{\Lambda^{\R}_j}{j\in\E_\alpha^c}\subset\R^{2m}$.
Then $\mathring{P}_\alpha\neq \varnothing$, and 
it follows that action (\ref{action giving N}) 
has trivial isotropy at any element of $U_{\alpha}$, so this action is free \new{on $U(\bT)=\cup_{\alpha} U_{\alpha}$.
The statement on the isotropy, found in \cite[Rem~1.1]{Bos} or Meersseman's thesis, 
is proved as follows: let $\vu\in \C^m$ be in the isotropy at $\vz\in U_{\alpha}$ and suppose without loss of generality 
that $n\in \E_\alpha^c$. 
This implies
$\text{Im}\ [(\Lambda_j-\Lambda_n)(\vu)]=0$ for all 
$j\in \E_\alpha^c\setminus\{n\}$. This in turn implies 
$(\Lambda^{\R}_j-\Lambda^{\R}_n)\Big(\begin{array}{c}\text{Im}\ \vu\\\text{Re}\ \vu\end{array}\Big)=0$. 
Since  $\mathring{P}_\alpha\neq \varnothing$, 
the vectors $\Lambda^{\R}_j-\Lambda^{\R}_n$, with $j\in \E_\alpha^c\setminus\{n\}$, 
are a basis of $\R^{2m}$, therefore
$\vu=0$.
}
The result below belongs to a circle of ideas that appear 
in the works of Bia\l ynicki-Birula and \'Swi\c ecicka. 
Similar results include also \cite{BH} Lemma 3.5 
and \cite{Tam} Prop. 2.3 and Cor. 2.4.

\begin{prop}
Bosio's conditions hold here, i.e.,
\begin{enumerate}
\item[(i)] $\mathring{P}_\alpha\cap 
  \mathring{P}_\beta  \neq\varnothing$ for every $\alpha, \beta$; 
\item[(ii)] for every $\mathcal E_{\alpha}^{c}\in \bE$ and every 
$i \in \E_\alpha$,\\ 
there exists $k\in \mathcal E_{\alpha}^{c}$ such that 
$\big(\mathcal E_{\alpha}^{c}  \setminus \{k\}\big)\cup \{i\}\in \bE$.
\end{enumerate}
\end{prop}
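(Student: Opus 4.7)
The plan is to handle the two conditions separately, using Gale duality for (i) and completeness of the fan $\Delta$ for (ii).

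For (i), I would exploit the bijective correspondence between virtual chambers and triangulations recalled from \cite{AS}. Unpacking this correspondence, the virtual chamber $\bE$ records a genuine chamber $C$ in the common refinement of the simplicial cones $\text{cone}(\hat{\Lambda}^\R_j : j \in S)$ with $|S| = 2m+1$; concretely, $C$ is a full-dimensional open cone in $\R^{2m+1}$ contained in the interior of $\text{cone}(\hat{\Lambda}^\R_j : j \in \E_\alpha^c)$ for \emph{every} $\alpha$. Because each $\hat{\Lambda}^\R_j$ has first coordinate equal to $1$, every vector in $C$ has positive first coordinate, and the affine slice $C \cap \{x_0 = 1\}$ is a nonempty open subset of $\R^{2m}$ contained in $\mathring{P}_\alpha$ for every $\alpha$. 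This yields $\bigcap_\alpha \mathring{P}_\alpha \neq \varnothing$, which is stronger than (i).

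For (ii), I would invoke completeness of $\Delta$, already noted as a consequence of the assumption $\sum v_i = 0$. Fix a maximal simplex $\E_\alpha \in \bT$ and an index $i \in \E_\alpha$. Since $|\E_\alpha|=d$, the set $\E_\alpha \setminus \{i\}$ spans a codimension-one face (wall) of $\text{cone}(\E_\alpha)$; by the standard pseudomanifold property of a complete simplicial fan, this wall is shared with exactly one other maximal cone $\text{cone}(\E_\beta)$, and $\E_\beta = (\E_\alpha \setminus \{i\}) \cup \{k\}$ for a unique $k$. Since $\E_\beta$ is itself a maximal simplex of $\bT$, the new vertex $k$ is not a ghost index, and $k \in \E_\alpha^c$. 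Hence $\E_\beta^c = (\E_\alpha^c \setminus \{k\}) \cup \{i\}$ lies in $\bE$, as required.

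The main obstacle is setting up the dictionary in (i): one must identify $\bE$ with an honest geometric chamber of the Gale-dual arrangement in $\R^{2m+1}$ rather than treat it as a bare combinatorial list, in order to produce a common interior point of all the cones $\text{cone}(\hat{\Lambda}^\R_j : j \in \E_\alpha^c)$. This is precisely the content of the Gale-duality correspondence borrowed from \cite{AS}, and once it is in place the affine hyperplane $\{x_0 = 1\}$ immediately converts the common interior point of the cones into a common interior point of the polytopes $P_\alpha$. Condition (ii) is then essentially formal, reducing to the pseudomanifold property of complete simplicial fans.
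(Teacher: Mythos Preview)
Your argument for (ii) is correct and essentially identical to the paper's: both use the pseudomanifold property of the complete simplicial fan $\Delta$ to find the adjacent maximal cone across the wall $\E_\alpha\setminus\{i\}$.

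Your argument for (i), however, has a genuine gap. You claim that the virtual chamber $\bE$ ``records a genuine chamber $C$'', i.e.\ a nonempty open cone lying in the interior of \emph{every} $\text{cone}(\hat{\Lambda}^\R_j:j\in\E_\alpha^c)$, and hence that $\bigcap_\alpha \mathring{P}_\alpha\neq\varnothing$. But this stronger conclusion is \emph{equivalent} to the triangulation $\bT$ being regular, i.e.\ to $\Delta$ being polytopal (this is exactly characterization~4 in Sect.~\ref{preliminaries}). The proposition is stated for arbitrary balanced odd triangulated configurations, and non-polytopal complete simplicial fans certainly exist; for those, $\bigcap_\alpha \mathring{P}_\alpha=\varnothing$ while the pairwise intersections $\mathring{P}_\alpha\cap\mathring{P}_\beta$ remain nonempty. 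The bijection of \cite{AS} between triangulations and \emph{virtual} chambers is purely combinatorial: a virtual chamber is just the list of complements $\{\E_\alpha^c\}$, and the adjective ``virtual'' is there precisely because such a list need not arise from an actual point of the Gale-dual arrangement. So the ``unpacking'' step fails.

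The paper's proof of (i) is genuinely pairwise. For fixed $\alpha,\beta$ it chooses a linear form $\varphi$ on $E$ separating $\text{cone}(\E_\alpha)$ from $\text{cone}(\E_\beta)$ (nonnegative on one, nonpositive on the other, vanishing on their common face). Gale duality converts the evaluation vector $(\varphi(v_1),\dots,\varphi(v_n))$ into a linear relation among the $\hat{\Lambda}^\R_j$; the sign pattern of the coefficients then exhibits a single point lying in both $\mathring{P}_\alpha$ and $\mathring{P}_\beta$. This argument produces a different point for each pair and makes no claim about a common intersection, which is why it works beyond the polytopal case.
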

\begin{proof}
(i) Pick in $\bT$ any two distinct maximal simplices $\E_\alpha$ and $\E_\beta$, 
and choose a linear form $\varphi$ that separates the respective cones,
in the sense that $\varphi$ is positive on 
$\text{cone}(\E_\alpha)$ and negative on $\text{cone}(\E_\beta)$, 
except on $\text{cone}(\E_\alpha)\cap \text{cone}(\E_\beta)$, where it is zero.
A linear evaluation such as 
$$\Big(\varphi(v_1),\dots, \varphi(v_{n}) \Big)$$
corresponds (\cf \cite{DL-R-S} p. 244) to a linear relation on the Gale dual 
with coefficients given by $\varphi(v_1),\dots, \varphi(v_{n})$. 
Here the relation has the form
$$
\sum_{j \in \E_\alpha \setminus \E_\beta} a_j \hat{\Lambda}^\R_j
- \sum_{j \in \E_\beta \setminus \E_\alpha} b_j \hat{\Lambda}^\R_j
+ \sum_{j \not\in \E_\alpha \cup \E_\beta} c_j \hat{\Lambda}^\R_j
=0\text{,}
$$
where all $a_j$'s and $b_j$'s are positive. 
For all  $j \not\in \E_\alpha \cup \E_\beta$, we write 
$c_j$ as the difference of two positive numbers $a_j - b_j$. 
Then 
$$
\sum_{j \in \E_\alpha \setminus \E_\beta} a_j \hat{\Lambda}^\R_j
+ \sum_{j \not\in \E_\alpha \cup \E_\beta} a_j \hat{\Lambda}^\R_j
=
\sum_{j \in \E_\beta \setminus \E_\alpha} b_j \hat{\Lambda}^\R_j
+ \sum_{j \not\in \E_\alpha \cup \E_\beta} b_j \hat{\Lambda}^\R_j\;
\text{, i.e.,}
$$
$$\sum_{j \in \E_\beta^c} a_j \hat{\Lambda}^\R_j
=
\sum_{j \in \E_\alpha^c} b_j \hat{\Lambda}^\R_j. 
$$
Thus ${\displaystyle \sum_{j \in \E_\beta^c} a_j =  \sum_{j \in \E_\alpha^c} b_j=: s}$, and 
$$\frac{1}{s}\sum_{j \in \E_\beta^c} a_j {\Lambda}^\R_j
=
\frac{1}{s}\sum_{j \in \E_\alpha^c} b_j {\Lambda}^\R_j.$$
The left hand side and right hand side belong to 
$\mathring{P}_\beta$ and 
$\mathring{P}_\alpha$ respectively. 
Therefore the intersection 
is nonempty.
 
 \medskip
(ii) Pick $\E_\alpha^c\in \bE$ and  
$i\in \E_\alpha$. The facet of $\text{cone}(\E_\alpha)$ 
determined by  omitting $v_i$ 
is shared by one and only one maximal cone, say $\text{cone}(\E_\beta)$. 
Then
$\E_\beta= \big(\E_\alpha \setminus \{i\}\big) \cup \{k\}$ 
for some $k$, and $k\not\in \E_\alpha$ by convexity 
of $\text{cone}(\E_\alpha)$. Then 
$\big(\E_\alpha^c \setminus \{k\}\big)\cup \{i\} =\E_\beta^c \in \bE$.
\end{proof}

\subsection{The foliation $\calF$ on $N$}\label{foliation}
Consider on $U(\bT)$ the following holomorphic action by $\C^{2m}$:
\begin{equation}\label{action giving X}
\vt . [z_1:\dots:z_n] = 
[\expo{\lr{1}(\vt)} z_1:\dots: \expo{\lr{n}(\vt)} z_n].
\end{equation}
Fix a $[\vz] \in U(\bT)$. Direct computations show that the isotropy 
at $[\vz]$ is a closed \Z-module 
$L_{\vz}\subset \R^{2m}\subset\C^{2m}$
of rank at most $2m$.

\smallskip
Action (\ref{action giving X}) commutes with (\ref{action giving N}), so it descends to $N$.
The restriction of action (\ref{action giving X}) to 
$$
\C^{m}_{N}:=\{\vt\in\C^{2m}\;|\;
\vt=\left(\begin{array}{c}\vu\\i\vu\end{array}\right),\,\vu\in\C^m\}$$
gives action (\ref{action giving N}). 
Define 
$$\C^{m}_{\calF}:=\{\vt\in\C^{2m}\;|\;
\vt=\left(\begin{array}{c}\vv\\ 0 \end{array}\right),\,\vv\in\C^m\}.$$
The projection
$\pi: \C^{2m} = \C^m_{N} \oplus \C^m_{\calF} \rightarrow \C^m_{\calF}$ is given
by $(\vx,\vy)\mapsto (\vx+i\vy, 0)$.
The isotropy of $[\vz]\in N$ for the action of $\C^{m}_{\calF}$ on $N$ 
is $\pi(L_{\vz})$.
Therefore this action has discrete isotropy, so 
it induces on $N$ a smooth foliation $\calF$ of dimension $m$. 
In the polytopal case, this foliation appears in \cite{LN} and \cite{M} 
(cases $m=1$ and $m\geq 1$ respectively). The foliation $\calF$ is holomorphic, 
and in particular transversely orientable. We show below the stronger 
statement that $\calF$ is homologically orientable (\cf (\ref{hom orient}) in Sect. \ref{Stanley}).

The leaf $\calF_{\vz}$ through a point $[\vz]\in N$ is the image, via an injective immersion, of 
$\C^m_{\calF}/ \pi(L_{\vz})$.
\rev{By varying the choice of the Gale dual, 
the \Z-module $L_{\vz}$ becomes $A^{-1}L_{\vz}$, 
with $A\in GL(2m,\R)$ (\cf Sect.~\ref{The dual configuration}), so 
the holomorphic structure on 
$\calF_{\vz}$ varies among all complex abelian groups on a fixed topological type. There is a unique $\tau$ such that $\calF_{\vz} \subset N(\tau)$. Define a  
subconfiguration of $\hat{\Lambda}^\R$ by 
$\hat{\Lambda}^\R(\tau):=(\hat{\Lambda}^\R_j)_{j\not\in\tau}$.  
By computing $\textrm{rank}\big(\pi(L_{\vz})\big)$ we obtain
the topological type of the leaf
$$\calF_{\vz}\approx {(S^{1})}^{B(\tau)-1}\times \R^{2m-B(\tau)+1}$$
where
$B(\tau)=n-\#\tau-b\big(\hat{\Lambda}^\R(\tau)\big)$. 
The topological type of the leaf closure is 
$$\overline{\calF_{\vz}}\approx(S^1)^{A(\tau)-1}$$ 
where $A(\tau)=n-\#\tau-a\big(\hat{\Lambda}^\R(\tau)\big)$.
In particular, these topological types depend on $V$ and $\tau$, 
but not on the choice of the Gale dual. }

\new{Generic leaves (i.e. lying in the open orbit $N(\varnothing)$) 
correspond to $\tau=\varnothing$. 
Since $a(V)=n-b(\hat{\Lambda}^\R)$ (\cf \ref{The dual configuration}), 
they are homeomorphic to 
${(S^{1})}^{a(V)-1}\times \R^{2m-a(V)+1}$.
If the configuration is rational, that is $a(V)=b(V)=2m+1$, all leaves are closed 
(\cf \cite{MV}).
On the other hand there are nonrational configurations $V$ such that $a(V)=1$; in these cases the generic 
leaf is $\C^m$.}
\subsubsection{The leaf space}\label{The leaf space}
Let $\Delta$, $v_1,\ldots,v_h$, and $Q$ 
be the fan, the rays generators (i.e., non-ghost vectors, so $h=n-k$), and the quasilattice associated to $(V,\bT)$ (see Sect.~\ref{convex geo data}).
 \rev{From the Audin-Cox construction and its nonrational complex generalization \cite{cx},} it is known that to this data there corresponds a geometric quotient $X=\rev{U'}(\Delta)/G$, 
where $\rev{U'}(\Delta)$ 
is an open subset of $\C^h$ that 
depends on the combinatorics of $\Delta$, and $G$ is a complex subgroup of $(\C^*)^h$ that depends on $Q$ and on the 
vectors $v_1,\ldots,v_h$.
If the configuration is rational (resp. nonrational), then $X$ is 
a complex manifold or a complex orbifold (resp. a non Hausdorff complex quasifold) of dimension $d$,
acted on holomorphically by the torus (resp. {\em quasitorus}) $\C^d/Q$ (\cf \cite{A,cox,p,cx}; 
the construction in \cite[Thm~2.2]{cx} can be adapted to the nonpolytopal case).
\rev{Quasifolds generalize orbifolds: the local model is a quotient of a manifold by the smooth action of a finite or countable group, non free on a closed subset of topological codimension at least $2$ \cite{p}.}
Let $(N,\calF)$ be any foliated complex manifold corresponding to $(V,\bT)$. 
The complex structure induced by $(N,\calF)$ on the leaf space 
depends on the initial data \rev{ $(V,\bT)$, but not on the choice of a Gale dual}. 
\begin{rem}\label{downstairs foliation}{\rm 
The action of the group $G$ does induce a holomorphic foliation on $U\rev{'}(\Delta)$.
However, since $G$ is in general, for rational and nonrational configurations, not connected \rev{(\cf Ex.~\ref{arbitrary p q})},
the leaf space is {\em not} $X$. This problem is overcome in our construction by ``increasing the dimension".}
 \end{rem}
\subsubsection{The foliation is Riemannian}\label{F is Riemannian}
Consider the $(S^1)^{n-1}$-action on $N$ induced by the $(\C^*)^{n-1}$-action on $\C P^{n-1}$, 
and construct a Riemannian metric on the compact 
manifold $N$ such that the compact group $(S^1)^{n-1}$ acts by isometries. Now we observe that  
$\C_{\calF}^{m}$ acts on $N$ as a subgroup of $(S^1)^{n-1}$: fix $[\vz]\in N$ and 
$\vt_1=\left(\begin{array}{c}\vv\\ 0 \end{array}\right)\in\C^m_{\calF}$.  Define
$\vt_2=\left(\begin{array}{c}\vu\\i\vu\end{array}\right)\in\C^m_{N}$, with $u=-i\text{Im}(\vv)$.
Then $\vt_1\cdot(\vt_2\cdot[\vz])=\vt\cdot[\vz]$ with 
$\vt=\left(\begin{array}{c}\text{Re}(\vv)\\\text{Im}(\vv)\end{array}\right)$, where the action used here is action
(\ref{action giving X}).
Now, the $\C^m_{\calF}$-action being locally free implies that the induced foliation $\calF$ is Riemannian
\cite[Ex.~2, p.100]{molino}. The same argument shows that the foliation is moreover Killing 
\cite{moz}.

\section{Topological results on the basic cohomology algebra}
\label{Betti numbers}

\noindent
In this section we show how the combinatorics of a balanced and odd triangulated configuration $(V,\bT)$
relate to the basic Betti numbers of any foliated manifold $(N,\calF)$ built from
$(V,\bT)$. The formulas are the same as the usual Betti numbers of simplicial toric varieties.

For the combinatorial part we refer the reader to 
\cite[Sect. 8.3]{ziegler}; for basic cohomology, see \cite{Ton}. We recall definitions and results in the form we need for our purposes.

\subsection{Shellings and $h$-vector}\label{shellings and h}
Fix a triangulated vector configuration $(V,\bT)$. In particular, $\bT$ 
is an abstract simplicial complex of pure dimension $d-1$ (topologically, a sphere). The  {\em dimension} of a (possibly empty) simplex $\tau\in\bT$ is $\#\tau-1$. 
Recall that the {\em $f$-vector} 
$(f_{-1},f_0,f_1,\ldots,f_{d-1})$
records the number of simplices in each dimension.
The fan $\Delta$ gives a ``linear realization'' of this simplicial complex, with 
simplices of dimension $l$ corresponding bijectively to cones of dimension $l+1$.

A {\em shelling} of $\bT$ (or of $\Delta$) is a linear ordering of the maximal simplices 
$\E_{1},\dots, \E_{f_{d-1}}$ such that for all $\alpha\geq 2$, $\text{cone}(\E_{\alpha})$ intersects 
$\text{cone}(\E_{1})\cup\dots\cup \text{cone}(\E_{\alpha-1})$ along a nonempty union of facets of 
$\text{cone}(\E_{\alpha})$. The number of such facets, called the {\em index} of $\E_{\alpha}$ w.r.t. the shelling, is denoted $i_{\alpha}$. We take $i_{1}=0$. 

Polytopal fans are shellable, i.e., they admit a shelling 
(\cite[Sect. 8.2]{ziegler}). 
The {\em $h$-vector} $(h_{0}, h_{1},\dots, h_{d})$
of $\bT$ (or $\Delta$) records 
the number of maximal simplices of each index in a given shelling. 
It is well-known that the $h$-vector 
is completely determined by the $f$-vector ---in particular, 
it is independent of the choice of a shelling--- and conversely it determines the $f$-vector. 

\subsection{Basic cohomology}
Let $M$ be a smooth manifold with a smooth foliation $\calG$. 
A differential form $\omega\in \Omega^{\bullet}(M)$ is called 
{\em basic} when for all vector fields $X$ tangent to $\calG$, 
$\iota_{X} \omega =0$ and $\iota_{X} d\omega =0$. 
When the foliation is given by the orbits of a Lie group $G$,
this means that the form is $G$-invariant and its kernel contains 
the tangent space to $\calG$. The cohomology 
of the complex of basic forms is in some sense the de Rham cohomology of the leaf space. The dimensions of these groups are called the {\em basic Betti numbers}. 
An example that gives some intuition for the proofs 
below is the torus $M=S^{1}\times S^{1}$ with $\calG$ given by lines of 
slope $s$. When $s$ is rational, the leaf space is a circle and 
$b_{\calG}^{1}(M)=1$.
When $s$ is irrational, the leaf space is not Hausdorff but, again, $b_{\calG}^{1}(M)=1$. Cohomologically, the leaf space is still a circle. 
Notice however that the basic Betti numbers of a foliated compact manifold can be infinite-dimensional in general, and that basic Betti numbers are not invariant under small deformations of Riemannian foliations \cite[Example 7.4]{No}.

\subsection{Computation of the basic Betti numbers}
\begin{thm}\label{prop betti numbers}
Let $(V,\bT)$ be a shellable, balanced and odd triangulated vector configuration, with $\text{dim}(V)=d$ and $h$-vector 
$(h_{0},\dots,h_{d})$.
Let $(N,\calF)$ be any foliated manifold built from $(V,\bT)$.
Then the basic Betti numbers are
$$b_{\calF}^{2j+1}(N)=0$$
and
$$b_{\calF}^{2j}(N)=h_{j}$$
for $j=0,\ldots,d$.
\end{thm}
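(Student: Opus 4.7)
The plan is to lift Stanley's shelling computation of the Betti numbers of a simplicial toric variety to the basic cohomology of $(Z,\calF)$, using the decomposition $Z=\bigsqcup_{\tau\in\bT}Z(\tau)$ from Sect.~\ref{leaves} and the Killing-Riemannian structure of $\calF$ established in Sect.~\ref{F is Riemannian}.

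First, using the shelling $\E_1,\dots,\E_{f_{d-1}}$ I would construct a descending filtration by closed $\calF$-saturated subsets
$$Z = Z^{(1)}\supset Z^{(2)}\supset\cdots\supset Z^{(f_{d-1})}\supset Z^{(f_{d-1}+1)}=\varnothing,$$
such that each successive difference $C_\alpha:=Z^{(\alpha)}\setminus Z^{(\alpha+1)}$ is $\calF$-saturated and open in $Z^{(\alpha)}$, modelled transversely on the Bia\l ynicki-Birula cell at the fixed point corresponding to $\E_\alpha$. Shellability identifies $C_\alpha = \bigsqcup_{R_\alpha\subset\tau\subset\E_\alpha}Z(\tau)$, whose transverse slice is the affine toric (or quasifold) chart $\C^{d-i_\alpha}$, where $R_\alpha$ is the restriction at step $\alpha$ with $|R_\alpha|=i_\alpha$.

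The key local computation is
$$H^{k}_{\calF,c}(C_\alpha)=\begin{cases}\R & k=2(d-i_\alpha),\\ 0 & \text{otherwise.}\end{cases}$$
Because $\calF$ arises from an $\R^{2m}$-subaction of the $(S^1)^{n-1}$-action on $Z$, a compact subtorus of $(S^1)^{n-1}$ acts on $C_\alpha$ by basic-preserving isometries and provides a $\calF$-equivariant radial retraction of $C_\alpha$ onto $Z(\E_\alpha)$. Combined with El Kacimi's finite-dimensionality and Poincar\'e duality theorems for Killing foliations \cite{EK}, this reduces the computation to the basic cohomology of the (quasi)toric affine chart, giving the claimed single class. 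Iterating the long exact sequence of the pair $(Z^{(\alpha)},Z^{(\alpha+1)})$
$$\cdots \to H^{k}_{\calF,c}(C_\alpha)\to H^{k}_\calF(Z^{(\alpha)})\to H^{k}_\calF(Z^{(\alpha+1)})\to H^{k+1}_{\calF,c}(C_\alpha)\to \cdots$$
starting from $Z^{(f_{d-1}+1)}=\varnothing$, all contributions sit in even degrees so the connecting maps vanish, every sequence splits, and by induction both $b^{2j}_\calF(Z^{(\alpha)})=\#\{\beta\geq\alpha:d-i_\beta=j\}$ and $b^{2j+1}_\calF(Z^{(\alpha)})=0$. At $\alpha=1$ this gives $b^{2j}_\calF(Z)=h_{d-j}$, which equals $h_j$ by the Dehn-Sommerville relations for the shellable $(d-1)$-sphere $\bT$.

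The main obstacle is the one-dimensionality of $H^{*}_{\calF,c}(C_\alpha)$ in the nonrational case, where generic leaves are not closed and extra classes could a priori arise from transverse holonomy; ruling this out requires the Killing-Riemannian structure and the explicit transverse (quasi)toric model, which is what reduces the computation to a finite-dimensional invariant-forms problem on a quasifold chart.
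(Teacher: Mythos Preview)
Your shelling induction is in the same spirit as the paper's, but the technical implementation is different, and yours has real gaps.

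The paper does \emph{not} use a closed filtration or compactly supported basic cohomology. Instead it builds an \emph{ascending open} cover $W_1\subset W_2\subset\cdots\subset W_{f_{d-1}}=Z$ with $W_\alpha=W_{\alpha-1}\cup Z_\alpha$, where $Z_\alpha$ is the image in $Z$ of the affine chart $U_\alpha$. The inductive step is Mayer--Vietoris for $W_\alpha=W_{\alpha-1}\cup Z_\alpha$, which needs only a basic partition of unity (obtained by averaging an ordinary one over the compact torus action). The core local input is Lemma~\ref{lemma-sphere}: for $\tau\subset\E_\alpha$ with $\#\tau=r$, the open set $Z_{\alpha,\tau}$ has basic cohomology equal to $H^\bullet(S^{2r-1})$ (or a point if $r=0$). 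This is proved by identifying $\calF$-basic forms on $U_{\alpha,\tau}$ with $\Gamma$-invariant forms on $(\C^r\!\smallsetminus\!\{0\})\times\C^{d-r}$, the $\Gamma$-action factoring through a torus so invariants compute ordinary cohomology. Since $W_{\alpha-1}\cap Z_\alpha=Z_{\alpha,\tau_\alpha}$ with $\tau_\alpha$ the restriction of $\E_\alpha$, Mayer--Vietoris gives the claim directly, yielding $b_\calF^{2j}(Z)=h_j$ without invoking Dehn--Sommerville.

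Your route has two genuine obstacles. First, the closed sets $Z^{(\alpha)}$ in a Bia\l ynicki-Birula-type filtration are unions of strata and are typically \emph{not} submanifolds, so $H^\bullet_\calF(Z^{(\alpha)})$ is not defined in the de Rham framework you are using; the long exact sequence of the pair $(Z^{(\alpha)},Z^{(\alpha+1)})$ you write down is therefore not available. Second, your computation of $H^\bullet_{\calF,c}(C_\alpha)$ appeals to Poincar\'e duality for basic cohomology on the \emph{non-compact} $C_\alpha$; El Kacimi's duality in \cite{EK} is stated for compact manifolds, and extending it (together with the needed excision/pair sequences) to this setting is exactly the hard foundational work your sketch hides. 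The paper's open-cover/Mayer--Vietoris argument sidesteps both issues: every space in sight is an honest open $\calF$-saturated manifold, and only ordinary basic cohomology is used.
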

\begin{proof}
We use a ``Morse-theoretic'' method due to Khovanskii for simple polytopes. Working dually with simplicial fans, we see that his method extends (from polytopal fans) to shellable fans.
Let $\E_1,\ldots,\E_{f_{d-1}}$ be a shelling of\, $\bT$. 
Consider the open subsets $U_\alpha$ defined in \ref{subsetsu} and their image, $N_\alpha$, in $N$.
We consider the $\calF$-saturated open covering of $N$ defined as follows:
$$\begin{array}{l}
W_1={N}_1\text{,}\\
W_\alpha=W_{\alpha-1}\cup {N}_\alpha\text{,}\quad \alpha=2,\ldots,f_{d-1}.
\end{array}
$$
Therefore
$${N}_1=W_1\subset W_2\subset \cdots \subset W_{f_{d-1}}=N.$$

We compute inductively the basic cohomology of the foliated manifolds $W_\alpha$ by means of a Mayer-Vietoris sequence. 
For this we need a {\em basic partition of unity}: pick any partition of unity relative to
the decomposition $W_\alpha=W_{\alpha-1}\cup N_\alpha$. 
The natural $(S^1)^d$-action on $\C^d$ descends to $N$. Averaging out 
the functions over this action will in particular make them constant on the leaves. 
 
From Lemma ~\ref{lemma-sphere} below, case $r=0$, we 
know that $b_{\calF}^{l}(W_{1}={N}_1)=\delta_{0,l}$. 
Now fix $\alpha\geq 2$ and make the induction hypothesis: 
$$(\mathcal H_{\alpha-1}) \qquad \text{if $l$ is odd then}\ b_{\calF}^{l}(W_{\alpha-1})=0.$$
We claim that $$b_{\calF}^{l}(W_{\alpha})=\begin{cases} 
b_{\calF}^{l}(W_{\alpha-1}) & \text{ if } l \neq 2i_{\alpha}, \\
b_{\calF}^{l}(W_{\alpha-1})+1 & \text{ if } l = 2i_{\alpha}.
\end{cases}$$ 
In particular, $(\mathcal H_{\alpha})$ holds, and the theorem follows by induction. 

Now we prove the claim. 
Using the notation of Lemma ~\ref{lemma-sphere}, 
remark first that $W_{\alpha-1}\cap N_\alpha$ is of the form $N_{\alpha,\tau}$, 
where 
$\tau$ is the {\em restriction} of $\E_{\alpha}$, defined in  
\cite[8.3]{ziegler} as 
\begin{equation}\label{restriction}
\tau=\setofst{i\in\E_{\alpha}}{(\E_{\alpha}\setminus i)  \subset \E_{\beta} \text{ with } \beta<\alpha}. 
\end{equation}
Notice that $\#\tau=i_{\alpha}$. 

Then Lemma ~\ref{lemma-sphere}
tells us that $W_{\alpha-1}\cap N_\alpha$ has no basic cohomology in positive even dimension.  
Thus by Mayer-Vietoris, for any odd integer $p$,\\ 
$0 \rightarrow H_{\calF}^{p}(W_{\alpha}) \rightarrow 
\underbrace{H_{\calF}^{p}(W_{\alpha-1})}_{\scriptscriptstyle =0 \text{ by } (\mathcal H_{\alpha-1})} \oplus 
\underbrace{H_{\calF}^{p}(N_\alpha)}_{\scriptscriptstyle =0 \text{ by Lemma ~\ref{lemma-sphere}}}
\rightarrow 
\underbrace{H_{\calF}^{p}(W_{\alpha-1}\cap N_\alpha)}_{(*)}\rightarrow H_{\calF}^{p+1}(W_\alpha)\rightarrow H_{\calF}^{p+1}(W_{\alpha-1}) \oplus 
\underbrace{H_{\calF}^{p+1}(N_\alpha)}_{\scriptscriptstyle =0 \text{ by Lemma ~\ref{lemma-sphere}}}
\rightarrow 0$

We see that the second term, $H_{\calF}^{p}(W_{\alpha})$, must be zero. 
Again by Lemma~\ref{lemma-sphere}, 
$(*)$ is zero unless 
$p=2i_{\alpha}-1$, in which case it is of dimension one. 
\end{proof}
\begin{lem}\label{lemma-sphere} 
Let $(V,\bT)$ be a balanced and odd triangulated vector configuration in a vector space of dimension $d$.
Let $\tau$ be a subset of a maximal simplex $\E_{\alpha}$ of\, $\bT$. 
We define an $\calF$-saturated open subset of $N$, denoted  
$N_{\alpha,\tau}$, as the image in $N$ of 
$$U_{\alpha,\tau}=U_{\alpha} \setminus 
\setofst{[z_1:\dots:z_n] }{\  \forall j\in \tau, z_j =  0}.$$   
In particular, $N_{\alpha,\tau}= N_{\alpha}$ when $\tau$ is empty. 
Then, denoting $r=\#\tau$, 
$$\forall l\geq 0,\ 
H_\calF^l(N_{\alpha,\tau})\approx
\begin{cases}
\R & \text{if}\quad l=0\quad \text{or}\quad l=2r-1,\\
{0} & \text{otherwise.}
\end{cases}
$$
In other words, the leaf space $N_{\alpha,\tau}/\calF$ is cohomologically a point when $r=0$,
and a $2r-1$-sphere when $r>0$. 
\end{lem}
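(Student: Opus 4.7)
The plan is to $\calF$-equivariantly deformation retract $Z_{\alpha,\tau}$ onto an explicit model whose leaf space is $S^{2r-1}$ up to homotopy, and then compute basic cohomology using the Killing structure of $\calF$.

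\emph{Setup.} Take WLOG $\E_\alpha=\{1,\ldots,d\}$ and $\tau=\{1,\ldots,r\}$. Normalizing $z_n=1$ gives $U_\alpha\cong\C^d\times(\C^*)^{2m}$ with coordinates $(w_i;z_j)$, so $U_{\alpha,\tau}\cong\bigl((\C^r\setminus\{0\})\times\C^{d-r}\bigr)\times(\C^*)^{2m}$. By Gale duality (Sect.~\ref{The dual configuration}), the characters $\{\Lambda^\R_j-\Lambda^\R_n\}_{j\in\E_\alpha^c\setminus\{n\}}$ form a basis of $(\R^{2m})^*$, so the $\C^{2m}$-action exhibits the leaf space as
\[
Z_{\alpha,\tau}/\calF\;=\;\bigl((\C^r\setminus\{0\})\times\C^{d-r}\bigr)\big/\Z^{2m},
\]
with $\Z^{2m}=\ker(\C^{2m}\twoheadrightarrow(\C^*)^{2m})$ acting on the $\C^d$-factor by coordinatewise $S^1$-rotations through characters $\chi_i:\Z^{2m}\to S^1$, $i\in\E_\alpha$.

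\emph{Retraction.} Linear scaling $w_i\mapsto sw_i$ for $i\in\E_\alpha\setminus\tau$ is $(\C^*)^{n-1}$-equivariant, hence $\C^m_Z$-equivariant, and descends to a $\calF$-saturated deformation retract of $Z_{\alpha,\tau}$ onto the image $Z'_{\alpha,\tau}$ of $(\C^r\setminus\{0\})\times(\C^*)^{2m}$ in $Z$. On the leaf-space side, one further retracts $\Z^{2m}$-equivariantly onto $S^{2r-1}\subset\C^r$ by radial normalization of $(w_1,\dots,w_r)$, which commutes with coordinate $S^1$-rotations.

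\emph{Basic cohomology.} The residual $\Z^{2m}$-action on $S^{2r-1}$ factors through a closed subgroup $K\subset(S^1)^r$; its elements act by orientation-preserving isometries and hence trivially on $H^*(S^{2r-1})=\R\oplus\R$. By the Killing property of $\calF$ (Sect.~\ref{F is Riemannian}), the $(S^1)^{n-1}$-action on $Z$ preserves $\calF$, and averaging basic forms over this action reduces basic cohomology to $(S^1)^r$-invariant de Rham cohomology on the spherical model. The conclusion
\[
H^l_\calF(Z_{\alpha,\tau})\cong H^l(S^{2r-1})
\]
then follows for $r>0$. For $r=0$, $Z_{\alpha,\varnothing}=Z_\alpha$ linearly retracts to a single leaf, giving cohomology concentrated in degree zero.

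\emph{Main obstacle.} The delicate step is the translation from the homotopy retraction on the (possibly non-Hausdorff) leaf space $Z_{\alpha,\tau}/\calF$ to an honest computation of basic cohomology upstairs on $Z_{\alpha,\tau}$: the radial normalization is only $(S^1)^{n-1}$-equivariant, not $(\C^*)^{n-1}$-equivariant, so it does not literally descend through the $\C^m_Z$-quotient. In the rational case $K$ is finite, the leaf space is a Hausdorff orbifold, and the issue reduces to averaging over a finite group; in the nonrational case it rests on Killing-foliation machinery (Molino, El Kacimi), which allows basic cohomology to be computed from $(S^1)^{n-1}$-invariant forms on any $\calF$-saturated retract, ultimately reducing the problem to the $K$-invariant cohomology of $S^{2r-1}$.
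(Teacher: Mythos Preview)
Your setup paragraph is essentially the paper's: the identification of the leaf space of $Z_{\alpha,\tau}$ with $\bigl((\C^r\setminus\{0\})\times\C^{d-r}\bigr)\big/\Gamma$ (your $\Z^{2m}$ is the paper's lattice $\Gamma\subset\R^{2m}$) is exactly what the paper obtains via its map $g$. The divergence, and the gap, comes afterwards. The obstacle you flag in your last paragraph is real, and you do not actually resolve it. Your first retraction onto $Z'_{\alpha,\tau}$ is fine, but the sentence ``averaging basic forms over this action reduces basic cohomology to $(S^1)^r$-invariant de Rham cohomology on the spherical model'' is not a proof: there is no foliated submanifold of $Z$ whose underlying space is $S^{2r-1}$, the radial retraction lives on a slice and does not commute with the $\C^m_Z$-action (as you say yourself), and the appeal to ``Killing-foliation machinery (Molino, El Kacimi)'' names a toolbox rather than a theorem. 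Nothing in \cite{EK} or \cite{molino} directly identifies $H^\bullet_\calF(Z'_{\alpha,\tau})$ with $H^\bullet(S^{2r-1})^K$; that passage is precisely what needs to be proved.

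The paper sidesteps this obstacle entirely by never descending to $Z$ and never retracting. It works on $U_{\alpha,\tau}$, where by definition $H^l_\calF(Z_{\alpha,\tau})=H^l\bigl(\Omega^\bullet_{\C^{2m}}(U_{\alpha,\tau})\bigr)$, the cohomology of forms basic for the full $\C^{2m}$-action~(\ref{action giving X}). The surjection
\[
g:\bigl((\C^r\setminus\{0\})\times\C^{d-r}\bigr)\times\C^{2m}\longrightarrow U_{\alpha,\tau}
\]
has precisely the $\Gamma$-orbits as fibers, so $g^*$ carries $\Omega^\bullet_{\C^{2m}}(U_{\alpha,\tau})$ isomorphically onto the complex of forms on the source that are $\Gamma$-invariant and basic for the product foliation by the $\C^{2m}$-factor; the latter condition just means pulled back from $(\C^r\setminus\{0\})\times\C^{d-r}$. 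Thus one is reduced to the cohomology of $\Omega^\bullet\bigl((\C^r\setminus\{0\})\times\C^{d-r}\bigr)^\Gamma$. Since $\Gamma$ acts through the \emph{connected} compact group $(S^1)^d$, the inclusion of $\Gamma$-invariant forms into all forms is a quasi-isomorphism by \cite[Lemma~2.2]{Bat}, and the answer is $H^l\bigl((\C^r\setminus\{0\})\times\C^{d-r}\bigr)\cong H^l(S^{2r-1})$. The whole argument is an isomorphism of chain complexes followed by an elementary averaging lemma; no deformation retraction on $Z$ and no Molino theory are needed. Your proposal can be repaired along exactly these lines: drop the retractions on $Z$, and instead argue directly that basic forms pull back to $\Gamma$-invariant forms on the slice.
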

\begin{proof} 
By definition of $\calF$ we have $H_\calF^l(N_{\alpha,\tau})\approx
H^{l}\Big(\Omega^\bullet_{\C^{2m}}\big(U_{\alpha,\tau}\big)\Big)$, where 
$\Omega^\bullet_{\C^{2m}}\big(U_{\alpha,\tau}\big)$ 
denotes the complex of forms on $U_{\alpha,\tau}$ that are basic 
with respect to the foliation induced by the $\C^{2m}$-action (\ref{action giving X}). 

Suppose for now that $r>0$, and assume for simplicity that 
$\E_\alpha= \{1,\dots,d\}$ and 
$\tau= \{1,\dots,r\}$. 
We know that 
$\left\{ \lr{j}-\lr{n} \right\}_{ j=d+1 \dots n-1}$ is an \R-basis of $\R^{2m}$, 
so it is a \C-basis of $\C^{2m}$. This implies surjectivity of 
the map
$$\applic{g}{\big (\C^r\! \smallsetminus\! \{0\} \times \C^{d-r} \big) \times \C^{2m}}{\ U_{\alpha,\tau}}
{\big( (z_1,\dots,z_d) ; \vt \big)}
{\ \vt . [z_1:\dots:z_d:\underbrace{1:\dots:1}_{2m+1}].}$$
On the other hand,
$g\big( (z_1,\dots,z_d) ; \vt \big)=g\big( (w_1,\dots,w_d) ; \vs \big)$ 
is equivalent to
$$\begin{cases}
(w_1,\dots,w_d)=
\Big( \expo{\left( \lr{1}-\lr{n} \right)(\vt-\vs)} z_1,\dots, 
\expo{\left( \lr{d}-\lr{n} \right)(\vt-\vs)} z_d \Big)\\
\left( \lr{j}-\lr{n} \right)(\vt-\vs) \in \Z, \ j=d+1 \dots n-1.
\end{cases}$$

\medskip
The second condition implies that $\vt-\vs \in L_{\vu}$
with $[\vu]$ any point in $N(\E_\alpha)$. Therefore $L_{\vu}$ is a lattice in $\R^{2m}$ that we denote $\Gamma$.
This shows that the fibers of $g$ are the orbits of the $\Gamma$-action on 
$\big (\C^r\! \smallsetminus\! \{0\} \times \C^{d-r} \big) \times \C^{2m}$ defined by
\begin{equation} \label{gamma action}
\vgamma. \big( (z_1,\dots,z_d) ; \vt \big) = 
\Big( (\expo{\left( \lr{1}-\lr{n} \right)(\vgamma)}z_1,\dots, 
\expo{\left( \lr{d}-\lr{n} \right)(\vgamma)}z_d) ; \vt - \vgamma \Big).
\end{equation}
Notice that the action of $\Gamma$ on the first factor does not depend on the choice of a Gale dual: 
changing this choice, 
$\lr{j}-\lr{n}$ becomes $(\lr{j}-\lr{n})A$ and $\Gamma$ 
becomes $A^{-1}\Gamma$ (\cf \ref{The dual configuration}), 
thus $(\lr{j}-\lr{n})A(A^{-1}\gamma)=(\lr{j}-\lr{n})(\gamma)$.
Remark also that the map $g$ induces a foliation-preserving homeomorphism
$$\big (\C^r\! \smallsetminus\! \{0\} \times \C^{d-r} \big) \times \C^{2m}/\simeq U_{\alpha,\tau}.$$
This in turn implies
\begin{equation}\label{uniformizing nbhd}
\big (\C^r\! \smallsetminus\! \{0\} \times \C^{d-r} \big) \times \C^{m}_{\calF}/\Gamma\simeq
N_{\alpha,\tau}.
\end{equation}

Now, $\omega \mapsto g^* (\omega)$ maps isomorphically 
the complex $\Omega^\bullet_{\C^{2m}}\big(U_{\alpha,\tau}\big)$ 
onto the complex $\mathcal C^{\Gamma}$ of forms on $\big (\C^r\! \smallsetminus\! \{0\} \times \C^{d-r} \big) \times\C^{2m}$ that are: (a) 
$\Gamma$-invariant; (b) basic with respect to 
the foliation with leaves $\{\underline{z}\}\times \C^{2m}$. But a form 
satisfies condition (b) if and only if it is the pull-back of a form by the projection 
$\big (\C^r\! \smallsetminus\! \{0\} \times \C^{d-r} \big) \times\C^{2m} \rightarrow \C^r\! \smallsetminus\! \{0\} \times \C^{d-r}$. Therefore 
$\mathcal C^{\Gamma}$ is (isomorphic to) the complex of $\Gamma$-invariant forms
$\Omega^{\bullet}\Big(\C^r\! \smallsetminus\! \{0\} \times \C^{d-r}\Big)^{\Gamma}$.  
By (\ref{gamma action}), we see that the action of $\Gamma$ factors through the standard 
$(S^{1})^{d}$-action on $\C^{d}$. Therefore we can apply \cite[Lemma~2.2]{Bat} to conclude that for every $l\geq 0$,
\begin{align*}
H^{l}\Big(\Omega^{\bullet}\big(\C^r\! \smallsetminus\! \{0\} \times \C^{d-r}\big)^{\Gamma}\Big)
&\approx 
H^{l}\Big(\Omega^{\bullet}\big(\C^r\! \smallsetminus\! \{0\} \times \C^{d-r}\big)\Big)\\
&\approx H^{l}(\C^r\! \smallsetminus\! \{0\}) \approx H^{l}(S^{2r-1}).
\end{align*}
In the case $r=0$ the proof is similar: replace every 
$\C^r\! \smallsetminus\! \{0\} \times \C^{d-r}$ with $\C^{d}$, 
and omit the last line.
\end{proof}
After a first version this paper was completed, we came across the article 
\cite{GT} by O. Goertsches and D. T\"oben,
that contains results in the spirit of our Th.~\ref{prop betti numbers}. We use their techniques in Sect. \ref{gen degree two}. 

\subsection{The basic cohomology algebra is generated in degree two}\label{gen degree two}

Let $(V,\bT)$ be a balanced and odd triangulated vector configuration, with 
associated fan denoted $\Delta$. 

In this section we assume {\em polytopality}, i.e. there exists 
a (necessarily) simple polytope $P\subset \R^{d}$ whose normal fan is $\Delta$. 
There is an inclusion-reversing duality between faces of 
$P$ and cones of $\Delta$ (or simplices of $\mathcal T$). In particular, each vertex of $P$ corresponds to a 
maximal simplex of $\mathcal T$. We fix a shelling of $\mathcal T$ in the following way: thinking of the last coordinate of the ambient space as the ``height'', we rotate $P$ until no two of its vertices have same height. 
We order the vertices from lowest to highest. It is easy to check that the corresponding order on 
the maximal simplices is a shelling of $\mathcal T$, that we denote $\E_{1},\dots, \E_{f_{d-1}}$. 

Fix a maximal simplex $\Ea$. Denote its restriction, defined in (\ref{restriction}),
with respect to the shelling by $\tau_{\alpha}$, and let 
$\tau_{\alpha}^{-}:=\Ea\smallsetminus\tau_{\alpha}$. Now denote by $V_{\alpha}$, $F_{\alpha}$ and $F_{\alpha}^{-}$ the closed 
faces of $P$ that are dual to 
$\Ea$, $\tau_{\alpha}$ and $\tau_{\alpha}^{-}$ respectively. We remark that $V_{\alpha}$ 
is the lowest (resp. highest) vertex of $F_{\alpha}$ (resp. $F_{\alpha}^{-}$).
Recall that for any simplex $\sigma\in\mathcal T$,
$N(\sigma)$ is the image in $N$ of the set
$\setofst{[\vz] \in U(\bT) }{\ z_l \neq0\;\text{iff}\; l\in\sigma^c}$, 
and
$\overline{N(\sigma)}$ is the disjoint union $\sqcup_{\sigma\subset\sigma'}N(\sigma')$. 

\begin{lem}\label{Lemma_intersections}\mbox{} 
\begin{enumerate}
\item[(\em i)] For all $\alpha, \beta$ such that $ \beta<\alpha$, $\overline{N(\tau_{\alpha})}$ 
and $\overline{N(\tau_{\beta}^-)}$ are disjoint;
\item[(\em ii)] 
For all $\alpha$, $\overline{N(\tau_{\alpha})}$ and $\overline{N(\tau_{\alpha}^-)}$ intersect transversally along 
$N(\Ea)$, which is a compact leaf of $\calF$.
\end{enumerate}
\end{lem}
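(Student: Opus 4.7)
The plan is to work entirely from the closure formula recalled just before the lemma, namely $\overline{Z(\tau)}\cap\overline{Z(\tau')}=\bigsqcup_{\sigma\in\bT,\ \sigma\supset\tau\cup\tau'}Z(\sigma)$. Both parts will then reduce to deciding which maximal simplices $\E_\gamma\in\bT$ contain the relevant union of subsimplices, combined with the polytope/fan duality: since the fan is simplicial, $\tau\subset\E_\gamma$ is equivalent to $V_\gamma$ lying in the face of $P$ dual to $\tau$.

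For part (\emph{i}) I will argue by contradiction. If $\overline{Z(\tau_\alpha)}\cap\overline{Z(\tau_\beta^-)}$ meets some $Z(\sigma)$, extend $\sigma$ to a maximal simplex $\E_\gamma$ containing both $\tau_\alpha$ and $\tau_\beta^-$; dually, $V_\gamma\in F_\alpha\cap F_\beta^-$. By the setup preceding the lemma, $V_\alpha$ is the lowest vertex of $F_\alpha$ and $V_\beta$ is the highest vertex of $F_\beta^-$, so in the height coordinate $x_d$ we get $V_\alpha\leq V_\gamma\leq V_\beta$. This contradicts $V_\beta<V_\alpha$, which is forced by $\beta<\alpha$ in the height-induced shelling.

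For part (\emph{ii}), the closure formula directly yields $\overline{Z(\tau_\alpha)}\cap\overline{Z(\tau_\alpha^-)}=Z(\E_\alpha)$, since $\tau_\alpha\cup\tau_\alpha^-=\E_\alpha$ is already maximal in $\bT$. Transversality is then immediate: near $Z(\E_\alpha)$, the two closed subsets are cut out by $\{z_i=0\}_{i\in\tau_\alpha}$ and $\{z_i=0\}_{i\in\tau_\alpha^-}$ in local coordinates on $U(\bT)$, and these are linearly independent linear conditions because $\tau_\alpha\cap\tau_\alpha^-=\varnothing$; the free $\C^m$-quotient preserves this transversality.

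To identify $Z(\E_\alpha)$ as a single compact leaf of $\calF$, I will invoke Sect.~\ref{leaves}: for the maximal simplex $\tau=\E_\alpha$ the isotropy lattice $L_\vz$ has full rank $2m$, so the leaf through any point of $Z(\E_\alpha)$ is a compact complex torus of complex dimension $m$. On the other hand $Z(\E_\alpha)$ is connected (it is the image in $Z$ of a $(\C^*)^{2m}$ after projective normalization), closed in $Z$ (since $\overline{Z(\E_\alpha)}=Z(\E_\alpha)$ by maximality of $\E_\alpha$), and of complex dimension $m$ by a straightforward count ($\dim_\C Z = d+m$ and $d$ independent conditions $z_j=0$ are imposed). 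Hence $Z(\E_\alpha)$ coincides with a single such torus leaf. I expect the main non-routine step to be part (\emph{i}), namely keeping the polytope/fan/shelling dualities lined up precisely enough to land the height inequality; the rest is bookkeeping with the closure formula and the leaf analysis already in Sect.~\ref{leaves}.
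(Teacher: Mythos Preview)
Your proof is correct and follows essentially the same approach as the paper: part (\emph{i}) is the same height--inequality argument on $F_\alpha$ and $F_\beta^-$, and part (\emph{ii}) amounts to the same coordinate computation, though the paper carries it out directly in the local model $\C^d\times\C^m_{\calF}/\Gamma$ of (\ref{uniformizing nbhd}) and reads off transversality and the compact-torus leaf $\{0\}\times\{0\}\times\C^m_{\calF}/\Gamma$ in one step, rather than separating the closure formula, transversality on $U(\bT)$, and a dimension count as you do.
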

\begin{proof}\ \\
\begin{enumerate}
\item[(\em i)] We know that $F_{\alpha}$ has no point below $V_{\alpha}$, and $F_{\beta}^{-}$ has no point above $V_{\beta}$. As $V_{\beta}$ is lower than $V_{\alpha}$,  the closed faces $F_{\alpha}$ and $F_{\beta}^{-}$ are disjoint, i.e. they have no common face. Dually, this means that no simplex contains both $\tau_{\alpha}$ and $\tau_{\alpha}^{-}$. This implies that the disjoint unions making up $\overline{N(\tau_{\alpha})}$ and  
$\overline{N(\tau_{\beta}^-)}$ have no common component, and the result follows.
\item[(\em ii)] 
On the image in $N$ of $U_{\alpha}$ (which is of the form $\C^{d}\times\C^m_{\calF}/\Gamma$; \cf (\ref{uniformizing nbhd})\,), the two submanifolds $\overline{N(\tau_{\alpha})}$ and $\overline{N(\tau_{\alpha}^-)}$ become respectively $\{0\}\times\C^{d-r}\times\C^m_{\calF}/\Gamma$ and $\C^r\times\{0\}\times\C^m_{\calF}/\Gamma$.
The intersection is $\{0\}\times\{0\}\times\C^m_{\calF}/\Gamma$, a compact torus.
\end{enumerate}
\end{proof}
From the polytopality assumption, we know that $N$ is an LVM-manifold (\cf \ref{preliminaries}). 
For each simplex $\sigma\in\mathcal T$, the subset $\overline{N(\sigma)}$ is defined by the vanishing of 
certain coordinates. From Property 5 in \ref{preliminaries}, it follows that $\overline{N(\sigma)}$ is 
itself an LVM-manifold. 
In particular it is a smooth, compact, $\calF$-saturated complex submanifold. 
Moreover, $\calF$ is holomorphic, so 
there is a fiber orientation form on the normal bundle $\scalebox{1.5}{$\nu$}\overline{N(\sigma)}=TN_{\vert\overline{N(\sigma)}}/T\overline{N(\sigma)}$ that is invariant by the foliation's holonomy.
It follows that this normal bundle is oriented as a foliated bundle (\cf paragraph above \cite[Cor.~4.8]{toben}, where the author defines  on the normal bundle a foliation \new{$\mathcal{G}$} induced by the holonomy of $\calF$). 
\rev{In this situation, it is possible \cite[Sect. 4]{toben} to define the integration along the fibers of basic 
forms with compact vertical support
$\pi_*\colon \Omega_{\mathcal{G},cv}^{\bullet\,+\,\#\sigma}(\scalebox{1.5}{$\nu$}\overline{N(\sigma)})\longrightarrow \Omega^\bullet_\calF(\overline{N(\sigma)})$. 
This yields a homomorphism
in cohomology 
$\pi_*\colon H_{\mathcal{G},cv}^{\bullet\,+\,\#\sigma}(\scalebox{1.5}{$\nu$}\overline{N(\sigma)})\longrightarrow H^\bullet_\calF(\overline{N(\sigma)})$, 
which is
an isomorphism \cite[Sect. 4, Th.~4.6]{toben}.} 
In particular $\overline{N(\sigma)}$ admits a basic Thom class $[\Phi_{\sigma}]$, of degree $\#\sigma$, that
can be viewed as a basic class on $N$.
Note that the basic Poincar\'e dual class exists, but the identification with the basic Thom class is not established.
Therefore we don't know the behaviour of basic Poincar\'e classes under intersections, so we use basic Thom classes 
instead. We need the following foliation-theoretic result:
\begin{prop}\label{nontrivial_thom}\mbox{} 
Let $\calF$ be a transversely oriented, Killing, Riemannian foliation 
on a compact connected manifold $M$.
If $L$ is a compact leaf, then its basic Thom class is a generator of the top basic cohomology of $M$. 
\end{prop}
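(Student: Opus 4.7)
The plan is to reduce the statement to the nontriviality of $[\Phi_L]$ by invoking basic Poincar\'e duality, and then use the defining property of the basic Thom class to exhibit a nonzero pairing.

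\textbf{Step 1: the top basic cohomology is one-dimensional.} Let $q$ denote the codimension of $\calF$. Since $\calF$ is transversely oriented, Killing, and Riemannian on the compact manifold $M$, it is homologically orientable: this follows from Mozgawa's work (\cite{moz}) or, equivalently, from the fact that the Molino closure of a Killing Riemannian foliation on a compact manifold admits an invariant transverse volume form. Homological orientability together with transverse orientation yields basic Poincar\'e duality for $(M,\calF)$; since $M$ is connected, $H^0_{\calF}(M)\cong\R$, hence $H^q_{\calF}(M)\cong\R$.

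\textbf{Step 2: nontriviality of $[\Phi_L]$.} The compact leaf $L$ has codimension $q$ in $M$. By the classical Reeb stability theorem applied to a Riemannian foliation with a compact leaf, the holonomy of $L$ is finite, and $L$ admits a saturated tubular neighborhood $U\simeq \widetilde L\times_{\Gamma} D^q$, with $\Gamma$ the (finite) holonomy group; in this chart $\calF$ is the product foliation and $\Phi_L$ is (up to the $\Gamma$-average of) the standard Thom form of the normal disk bundle. In particular its basic class has degree $q$ and lies in the one-dimensional space $H^q_{\calF}(M)$ from Step~1. To show $[\Phi_L]\neq 0$ we pair it with $[1]\in H^0_{\calF}(M)$ under the basic Poincar\'e pairing. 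Using the transverse volume form provided by homological orientability, this pairing computes as the basic integral of $\Phi_L$ over $M$. By the defining property of the basic Thom class (\cite[Th.~4.6]{toben}), integration of $\Phi_L$ along the fibers of the normal bundle of $L$ yields the constant function~$1$ on $L$, so that the basic integral equals the transverse mass of the leaf $L$, a strictly positive constant.

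\textbf{Step 3: conclusion.} Since $[\Phi_L]$ is a nonzero vector in the one-dimensional space $H^q_{\calF}(M)$, it generates it.

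The main point of difficulty is Step~2: one must verify that the ``fiber integration $=1$'' characterization of the basic Thom class of \cite{toben} matches the transverse integration supplied by the basic Poincar\'e pairing in the homologically orientable Killing setting. Once the conventions are aligned (the transverse volume form used for Poincar\'e duality is precisely the one with respect to which the Thom class is normalized on the fibers of the normal bundle of $L$), the matching is immediate, and no further computation is needed.
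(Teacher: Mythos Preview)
Your strategy and the paper's coincide at the top level: show that the transverse integral of the basic Thom form is nonzero, hence its class is a nonzero element of the (at most one-dimensional) space $H^q_\calF(M)$. But there is a genuine gap in how you carry this out.

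In Step~1 you assert that a transversely oriented Killing Riemannian foliation on a compact manifold is automatically homologically orientable, citing \cite{moz} and a remark about the Molino closure. Neither justification works as stated: Mozgawa defines Killing foliations but does not prove tautness, and an invariant transverse volume form for the leaf-closure foliation is not one for $\calF$ itself. Without homological orientability, the basic Poincar\'e pairing you invoke in Step~2 could be identically zero, and the argument collapses. The paper does not assume tautness: it defines the transverse integral $\int_\calF$ through the Molino bundle $\hat M$ and the basic manifold $W$ following \cite{sergiescu,toben} (this needs only the Riemannian hypothesis), and it \emph{deduces} $H^q_\calF(M)\cong\R$ from $\int_\calF\Phi\neq 0$, rather than assuming it.

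In Step~2 you defer precisely the computation that constitutes the proof: ``transverse mass of $L$'' is not defined, and the compatibility between the Thom normalization along $NL$ and the transverse integral over the leaf space is exactly what has to be established, not a convention to align. The paper does this as follows: pull $\Phi$ back to $\hat M$, wedge with the connection-form volume $\nu$, contract with the fundamental fields of the structural $\a$-action, and push down to a top form $\eta$ on $W$. The Killing and transverse-orientation hypotheses are used here to make $W$ orientable. One then checks that $\eta$ is nonzero exactly on the open connected subset $\rho(\{\pi^*\Phi\neq 0\})$ (connected because $L$ is), so $\int_W\eta\neq 0$ by a sign argument. If you could supply an independent proof of tautness and then carry out a careful Fubini computation with a Rummler leafwise volume form $\chi$ in the Reeb tubular neighborhood (giving $\int_M\Phi\wedge\chi=\mathrm{vol}(L)>0$), that would be a legitimate and somewhat more elementary alternative to the paper's Molino-theoretic route; as written, the proposal is incomplete.
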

\begin{proof}\mbox{}\\
{\em Step 1 -- Preliminaries.} 
In order to prove this statement we need to use the notion of {\em transverse integration} \cite{sergiescu}, 
which in turn involves the so-called {\em Molino bundle} $\hat M$ and {\em basic manifold} $W.$ 
We briefly recall here the notation and main properties of the Molino construction; for 
details we refer to \cite{molino,GT}.
Denote by $q$ the codimension of $\calF$. 
By assumption there is a metric $g$ on the normal bundle $\scalebox{1.5}{$\nu$}\calF$ with respect to which $\calF$ is Riemannian. 
Since $\calF$ is transversely oriented, we can consider the $SO(q)$-principal bundle $\pi\colon{\hat M}\rightarrow M$ 
of positively oriented orthonormal frames of 
$\scalebox{1.5}{$\nu$}\calF=TM/T\calF$.
Let $\omega$ be the transverse Levi-Civita connection
on $\hat M$ and let $H=\ker \omega$ be the corresponding horizontal
distribution.
In particular, at each ${\hat m}\in{\hat M}$ we have the splitting 
$T_{\hat m}{\hat M}=H_{\hat m}\oplus V_{\hat m}$, where 
$V_{\hat m}$ is tangent to the fibre $\pi\inv\big(\pi( \hat m)\big)$. 
The horizontal lift of $\calF$ to $\hat M$ is a transversely parallelizable foliation $\hat{\calF}$ of same dimension, 
Riemannian for a certain metric $\hat g$ on $\scalebox{1.5}{$\nu$}\hat{\calF}=T\hat M/ T\hat{\calF}$.
On each leaf $\hat L$ of $\hat \calF$, $\pi$ is a Galois covering of a corresponding leaf $L$ in $N$ 
(the group acting on $\hat L$ is the holonomy of $L$). 
The {\em commuting sheaf} of germs of 
$\hat\calF$-transverse fields that commute with all global transverse 
fields of $({\hat M},{\hat\calF})$
is locally constant. The foliation $\calF$ is Killing if and only if this sheaf is globally constant. 
In this case the stalk is an abelian Lie algebra $\a$.
This gives rise to two transverse and $\pi$-equivariant actions by $\a$ on $M$ and $\hat M$.
In both $M$ and $\hat M$, the $\a$-orbit of any leaf is its closure, 
which on $\hat M$ has always dimension $\dim\calF+\dim \a$. 
The space of leaf closures $\hat M\big/\overline{{\hat {\calF}}}$ is a smooth manifold $W$ 
called the {\em basic manifold}. The action of $SO(q)$ on $M$ induces a 
smooth action of $SO(q)$ on $W$. The projection $\rho\colon{\hat M}\rightarrow W$ is locally trivial 
and the orbit space $W/SO(q)$
is homeomorphic to 
the space of leaf closures $M\big/\overline{\calF}$. 
Note that if $\calF$ is Killing and transversely oriented then $W$ is orientable \cite[Sect. 5]{toben}.

Denote $l=\dim SO(q)$ and consider a volume element $\dot\nu$ on $\so(q)$. 
Following \cite[Sect. 5]{toben} (see also \cite{sergiescu}) 
we then consider the basic $l$-form
$\nu$ on $\hat M$ defined at each ${\hat m}\in{\hat M}$ by:
$$\nu_{\hat m}(X_1,\ldots,X_l)=\dot\nu(\omega(X_1),\ldots,\omega(X_l)),\quad X_i\in T_{\hat m}{\hat M}.$$ 
The transverse integration of a given basic $q$-form $\alpha$ on $M$ is defined by
$$\int_{\calF}\alpha=\int_{{\hat{\calF}}}(\pi^{*}\alpha)\wedge\nu\text{,}$$
where $\int_{{\hat{\calF}}}$ denotes transverse integration on $\hat M$, which  
can in turn be defined as follows: 
let $\beta\in\Omega^{q+l}_{\hat\calF}(\hat M)$ (in other degrees $\int_{{\hat{\calF}}}$ evaluates to zero). 
Denote by $\iota_{X}\beta$ the contraction of $\beta$ with the fundamental transverse fields $X^{*}_{1},\dots, X^{*}_{\dim \a}$ of the $\a$-action. 
Then $\iota_{X}\beta$ can be written $\rho^{*}(\rho_{\sharp}\beta)$ for some top form 
$\rho_{\sharp}\beta$ on $W$. Now define
$$\int_{\hat\calF} \beta = \int_{W} \rho_{\sharp} \beta.$$
{\em Step 2.}
Now consider the compact leaf $L$ in $M$, and its normal bundle $\scalebox{1.5}{$\nu$}L$.
We consider on $\hat M$ the {\em transverse horizontal bundle} $\calH=H/T\hat\calF$.
The projection $\pi$ induces isomorphisms $H_{\hat m}\simeq T_mM$ and $\calH_{\hat m}\simeq 
T_mM/T_m{\calF} \simeq \scalebox{1.5}{$\nu$}L_{m}$, where $m=\pi({\hat m})$.

According to \cite[Cor. 4.8]{toben}, $\scalebox{1.5}{$\nu$}L$ is oriented {\em as a foliated bundle}.
Therefore, we can consider a basic Thom form 
$\Phi\in \Omega^{q}_{ \calF}(M)$. 
We claim that $\pi^{*}\Phi\in \Omega^{q}(\hat M)$ is $\hat \calF$-basic: 
For a vector $\hat X$ tangent to $\hat\calF$, 
$\iota_{\hat X}\pi^{*}\Phi=\iota_{\pi_{*}\hat X}\Phi \circ \pi_{*}$, but $\iota_{\pi_{*}\hat X}\Phi=0$ as 
$\Phi$ is $\calF$-basic. As $\pi^{*}\Phi$ is closed, we conclude that it is basic with respect to 
$\hat \calF$.

Now, let $\eta$ be the $(q+l-\dim\a)$-form on $W$ such that $\rho^*(\eta)=\iota_X(\pi^*\Phi \wedge\nu)$,~i.e. 
$\eta=\rho_{\sharp}(\pi^*\Phi \wedge\nu)$. The form $\eta$ is a top form on $W$, and by definition of transverse integration 
$$\int_{\calF}\Phi=\int_W \eta.$$
We are left to show that the right hand side is nonzero: by \cite[Sect. 2]{sergiescu}, $\Phi$ is then not exact, thus  
$[\Phi]$ is nonzero in $H^{q}_{\calF}(M)$,
which implies that $H^{q}_{\calF}(M)$ 
is one-dimensional and generated by $[\Phi]$ (\cf (\ref{hom orient}) p.\pageref{hom orient} 
and comments below).
Let ${\hat m}\in{\hat M}$. Since $\calH_{\hat m}\simeq T_mM/T_m{\calF}$ 
and $\nu$ is zero on horizontal vectors, the 
$\hat \calF$-basic top form $\pi^{*}\Phi \wedge\nu$ is nonzero at $\hat m$ is and only if 
$\pi^{*}\Phi$ is. 
Since $L$ is connected, we can assume that the subset 
$\{{\hat m}\in{\hat M}\;|\;(\pi^{*}\Phi)_{\hat m}\neq0\}$ is open, 
connected, and $\overline{\hat \calF}$-saturated, since $\pi^{*}\Phi$ is $\hat\calF$-basic and therefore $\a$-invariant 
\cite[Lem.~3.15]{GT}.
The top form $\eta$ is non zero at $w\in W$ if and only if
$w$ lies in the open, connected subset 
$\rho(\{{\hat m}\in{\hat M}\;|\;(\pi^{*}\Phi)_{\hat m}\neq0\})$. It follows that
$\int_W\eta\neq0$.
\end{proof}

\begin{thm}\label{generated in dim two}
Let $(V,\bT)$ be a polytopal, balanced and odd triangulated vector configuration. Let $(N,\calF)$ be any foliated manifold built from $(V,\bT)$. Then the basic cohomology algebra $H^{\bullet}_{\calF}(N)$ is generated by classes of degree two.
\end{thm}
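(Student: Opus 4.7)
The plan is to exhibit a basis of $H^\bullet_\calF(Z)$ consisting of products of the degree-two classes $[\Phi_{\{j\}}]$, $j=1,\dots,n$, where $[\Phi_{\{j\}}]$ is the basic Thom class of the closed $\calF$-saturated divisor $\overline{Z(\{j\})}$.

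First I would establish \emph{multiplicativity of Thom classes}: for each simplex $\sigma \in \mathcal T$, the divisors $\overline{Z(\{j\})}$ with $j\in\sigma$ intersect transversally, with intersection $\overline{Z(\sigma)}$. This is immediate from the local holomorphic structure since each divisor is locally cut out by a single coordinate $z_j$. Choosing basic Thom forms supported in arbitrarily small basic tubular neighborhoods, whose existence for saturated submanifolds of a Riemannian foliation is standard (\cf~\cite{molino, toben}), a basic partition of unity argument then yields
$$
[\Phi_\sigma]\;=\;\prod_{j\in\sigma} [\Phi_{\{j\}}]\qquad \text{in } H^{\bullet}_\calF(Z).
$$
In particular each $[\Phi_{\tau_\alpha}]$ is a product of $i_\alpha$ classes of degree two.

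Next I would prove linear independence of $\{[\Phi_{\tau_\alpha}]\}$ degree by degree. Fix $i\in\{0,\dots,d\}$ and order the indices $\alpha$ with $i_\alpha=i$ according to the shelling. Consider the pairing matrix
$$
M_{\alpha\beta}\;=\;\int_\calF [\Phi_{\tau_\alpha}]\cup[\Phi_{\tau_\beta^-}],
$$
which is well defined because these cup products land in the top basic cohomology. For $\beta<\alpha$, Lemma~\ref{Lemma_intersections}(i) guarantees that $\overline{Z(\tau_\alpha)}$ and $\overline{Z(\tau_\beta^-)}$ are disjoint, so the Thom forms can be chosen with disjoint supports and $M_{\alpha\beta}=0$. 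For $\beta=\alpha$, Lemma~\ref{Lemma_intersections}(ii) together with multiplicativity identifies $[\Phi_{\tau_\alpha}]\cup[\Phi_{\tau_\alpha^-}]$ as the basic Thom class of the compact leaf $Z(\E_\alpha)$, which by Proposition~\ref{nontrivial_thom} generates the top basic cohomology; hence $M_{\alpha\alpha}\neq 0$. Thus $M$ is triangular with nonzero diagonal, hence invertible, and the $[\Phi_{\tau_\alpha}]$ with $i_\alpha=i$ are linearly independent in $H^{2i}_\calF(Z)$.

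By Theorem~\ref{prop betti numbers}, $\dim H^{2i}_\calF(Z)=h_i=\#\{\alpha:i_\alpha=i\}$, so these linearly independent classes already form a basis. Combined with multiplicativity, every basic cohomology class is a polynomial in the degree-two classes $[\Phi_{\{j\}}]$, proving the theorem. The main obstacle I foresee is justifying the multiplicativity of basic Thom classes under transverse intersection of $\calF$-saturated submanifolds: the classical de Rham argument requires Thom forms supported in prescribed tubular neighborhoods whose wedge product agrees (in basic cohomology) with the Thom form of the intersection. Carrying this out in the basic-cohomological framework relies on the existence of basic tubular neighborhoods for foliated submanifolds and on the transverse-integration formalism already deployed in the proof of Proposition~\ref{nontrivial_thom}.
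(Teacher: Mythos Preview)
Your proposal is correct and follows essentially the same route as the paper: both arguments produce a basis of $H^{2r}_\calF(Z)$ from the basic Thom classes $[\Phi_{\tau_\alpha}]$ of the restrictions, prove linear independence by pairing with the complementary Thom classes $[\Phi_{\tau_\alpha^-}]$ via Lemma~\ref{Lemma_intersections} and Proposition~\ref{nontrivial_thom}, and then invoke multiplicativity under transverse intersection to write each $[\Phi_{\tau_\alpha}]$ as a product of degree-two classes. Your triangular-matrix formulation is just a repackaging of the paper's iterative elimination, and the paper likewise flags (without full detail) the adaptation of the classical Bott--Tu argument for multiplicativity of basic Thom classes; one cosmetic point is that the divisors $\overline{Z(\{j\})}$ are only nonempty for non-ghost indices $j\le h$, so your generating set should be restricted accordingly.
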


\begin{proof}
Fix $r$ such that $0\leq r\leq d$. By Th.~\ref{prop betti numbers}, $\dim b_{\calF}^{2r}=h_r$. Moreover, from
the proof of Th.~\ref{prop betti numbers}, 
there are, in the shelling $\E_1,\ldots,\E_{f_{d-1}}$, exactly $h_r$ maximal simplices $\E_{\alpha_1},\ldots,\E_{\alpha_{h_r}}$ whose restrictions 
$\tau_{\alpha_j}$'s have cardinality equal to $r$.

For every $j$, denote the basic Thom classes of  $\overline{N(\tau_{\alpha_j})}$ and $\overline{N(\tau_{\alpha_j}^-)}$  
by $[\psi_{j}]$ and $[\psi_j^-]$, of degree $2r$ and $2d-2r$ respectively.

We prove below that the classes $[\psi_j]$, $j=1,\ldots,h_r$ are linearly independent, 
and therefore give a basis of the vector space $H_{\calF}^{2r}(N)$. Consider a linear combination 
$\sum_{j=1}^{h_{r}}a_j[\psi_j]=0$, with $a_j\in\R$.

By Lem. \ref{Lemma_intersections} {\it (i)},  the cup product 
$[\psi_1^{-}] \smile [\psi_j]$ is zero when $j>1$ as these forms have disjoint supports, 
so the cup product of the above equality with $[\psi_1^{-}]$ 
gives $a_{1} [\psi_1^{-}] \smile [\psi_1]=0$.

The classical proof \cite{bott-tu} can be adapted
to show that given two transversely intersecting, smooth, compact, saturated,  
submanifolds, with oriented foliated normal bundles, 
the basic Thom class of their intersection is the cup product of their basic Thom classes. 
We know that $\calF$ is Riemannian and Killing (\cf \ref{F is Riemannian}). 
Therefore, by Lem. \ref{Lemma_intersections} {\it (ii)} and Prop.~\ref{nontrivial_thom}, 
$[\psi_1^{-}] \smile [\psi_1]$ 
is a nonzero generator of the top basic cohomology of $(M,\calF)$, which is one-dimensional 
by (\ref{hom orient}) below. 
Then $a_{1}$ must be zero, and repeating this argument shows that all coefficients vanish. 

Now, for each index $i$ in $\{1,\ldots,h\}$, 
the basic Thom class
of $\overline{N(\{i\})}$ is a basic class $[\varphi_i]$ of degree $2$ on $N$. 
Remarking that for each $j$, $\overline{N(\tau_{\alpha_j})}$ is the transverse intersection
$\cap_{i\in\tau_{\alpha_j}}\overline{N(\{i\})}$
(an analogous idea is used in the proof of \cite[Prop.~3.10 (ii)]{DJ}), we conclude that
\rev{$[\psi_j]={\displaystyle \smile_{i\in\tau_{\alpha_j}}} [\varphi_i]$}. 
It follows that the algebra
 $H^{\bullet}_{\calF}(N)$ is generated in degree $2$.
\end{proof}
\section{Examples}\label{examples}
\subsection{The projective line $\C\P^{1}$ and variants} Let $E=\R$. Consider the
configuration $V:=\big(p, -q , q-p, 0\big)$, where $p$ and $q$ are positive reals. \rev{We have $d=1$ and $n=4$, so $m=1$.} \rev{As noticed in \ref{CmandN}, we can use an ambient automorphism to normalize $V$} to
$\big(\frac{p}{q}, -1 , 1-\frac{p}{q}, 0\big)$. 
Therefore, there is only one real parameter here, namely the fraction 
$\frac{p}{q}$. We will distinguish the following cases
\begin{itemize}
\item[(a)] $\frac{p}{q}\in\Q$  with $p,q$ coprime integers;
\item[(b)] $\frac{p}{q}\in\R\setminus\Q$. 
\end{itemize}
We triangulate $V$ by $\bT=\{\E_{1}=\{1\},\E_{2}=\{2\},\varnothing\}$
\rev{---in particular $v_{3}= 1-\frac{p}{q}$ and $v_{4}=0$ are ghosts}.  
Then the fan and quasilattice associated with $(V,\bT)$ are respectively: 
the one dimensional fan whose maximal cones are
$\text{cone}(\E_{1})=\text{Span}_{\R_{\geq 0}}v_{1}=\R_{\geq 0}$ and 
$\text{cone}(\E_{2})=\text{Span}_{\R_{\geq 0}}v_{2}=\R_{\leq 0}$, that is the usual fan of $\C P^1$; 
the quasilattice $Q=\textrm{Span}_{\Z}\{1,\frac{p}{q}\}$. In case (a) $Q$ is $\Z$. In case (b), $Q$ is dense and has rank two.
The virtual chamber is 
$\bE=\{\E^{c}_{1}=\{234\}, \E^{c}_{2}=\{134\}\}$ and 
\begin{align*}
U(\bT) & =U_{1} \cup U_{2} \\
          & =\setofst{[\vz]\in \C\P^{3}}{z_{2}\neq 0, z_{3}\neq 0, z_{4}\neq 0}
\cup \setofst{[\vz]\in \C\P^{3}}{z_{1}\neq 0, z_{3}\neq 0, z_{4}\neq 0}.
\end{align*} 
Choose 
$M=\begin{bmatrix} 1&1&0\\1&\frac{p}{q}&0\\1&0&0\\1&0&1\\ \end{bmatrix}$
so $\Lambda^{\R}=\begin{bmatrix} 1&\frac{p}{q}&0&0\\0&0&0&1\\ \end{bmatrix}$ 
and $\Lambda=\begin{bmatrix} 1&\frac{p}{q}&0&i \end{bmatrix}$. 
\rev{It is now straightforward to write explicitly action  (\ref{action giving N}) and action  (\ref{action giving X}).}
The leaf $\calF_{1}$ corresponding to the simplex $\{1\}$ is given by
$\C_{\calF}/\pi(L_1) \approx  \C/ \text{Span}_{\Z}\{\frac{q}{p},i\}$, while
the leaf $\calF_{2}$ corresponding to the simplex $\{2\}$ is given by
$\C_{\calF}/\pi(L_2) \approx  \C/\text{Span}_{\Z}\{1,i\}$.
Now let $[\vz]$ be a generic point, i.e., $[\vz]\in N(\varnothing)$. 
Then $(t,u)\in L_{\vz}\Leftrightarrow qt,pt,u\in\Z$. In case (a), $L_{\vz}=\Z^{2}$, so 
$\calF_{\vz}\approx \C/\text{Span}_{\Z}\{1,i\}$. 
 
In case (b), $\textrm{Rank}(L_{\vz})=0$, the generic leaf is $\C$ and its closure in $N$ is an $(S^1)^3$.  

From the proof of Lemma \ref{lemma-sphere}, we know that $\C\hookrightarrow U_{1}, z_{1}\mapsto [z_{1}:1:1:1]$ 
gives a local slice for action (\ref{action giving X}). 
This slice is stabilized by \rev{$\frac{q}{p}\Z\times\Z\subset\C^{2}$}. Hence,  
$X_{1}:=N_{1}/\calF=U_{1}/\C^{2}$ can be identified with  $\C$ modulo \rev{$z_{1}\mapsto \expo{\frac{q}{p}}z_{1}$.} 
\rev{Similarly $X_{2}:=N_{2}/\calF=U_{2}/\C^{2}$ can be identified with  $\C$ modulo $z_{2}\mapsto \expo{\frac{p}{q}t}z_{2}$.
The leaf space is then $X=X_{1}\cup X_{2}$.}
In case (a), if $p=q=1$ the leaf space $X$ is $\C P^1$. 
For $p,q$ any coprime integers, the leaf space
is a weighted projective space, that is the quotient of $\C^2\setminus\{0\}$
by the action of $\C^*$ with weights $q$ and $p$.

In case (b), the local groups at the poles are infinite, of rank one. 
The leaf space is the toric quasifold described in detail in \cite[Ex.~1.13,3.5]{p} and \cite[Ex.~2.6]{cx}.

To describe $\calF$, and see how it desingularizes $X$, compose the above $\C\hookrightarrow U_{1}$ with the 
quotient $U_{1}\to N_{1}$. We obtain a slice $\C\hookrightarrow N_{1}$ for the action of $\C_{\calF}$. 
The leaf passing through $z_{1}=0$ is the leaf $\calF_{1}$ above; it intersects the slice only once. 
The leaf through any $z_{1}\neq 0$ hits the slice $p$ times \rev{in case (a) and}
infinitely many times \rev{in case (b).}
\rev{Hence it wraps around $\calF_{1}$ $p$ times or infinitely many times
respectively.}
 
 \subsection{The non-necessarily reduced orbifold $\C\P^{1}$}\label{arbitrary p q}
Now we encode in a vector configuration the case of $p$ and $q$ non necessarily coprime, $Q=\Z$.
Choose $V=(p,-q,1,q-p-1)$ and take 
$M=\begin{bmatrix} 1&q&0\\1&p&1\\1&0&q\\1&0&0\\ \end{bmatrix}$
so $\Lambda^{\R}=\begin{bmatrix} q&p&0&0\\0&1&q&0\\ \end{bmatrix}$ 
and $\Lambda=\begin{bmatrix} q&p+i&qi&0 \end{bmatrix}$. 
Action (\ref{action giving N}) becomes 
$t.[\vz]=[e^{2\pi i q t} z_1: e^{2\pi i (p+i) t} z_{2}: e^{-2\pi  q t} z_{3}:  z_{4} ]$ 
and action (\ref{action giving X}) becomes 
$(t,u). [\vz] = [e^{2\pi i q t} z_1: e^{2\pi i (p t+u)} z_{2}:  e^{2\pi i qu} z_{3}:  z_{4} ]$.

We take the same triangulation $\bT$ as above, so we can use the same slices,
which are stabilized respectively by $\big\{(t,u)=(\frac{qk-l}{pq},\frac{l}{q})\ \big\vert\  k,l\in\Z\big\}\subset \C^{2}$ 
and $\big\{(t,u)=(\frac{k}{q},\frac{l}{q})\ \big\vert\ k,l\in\Z\big\}\subset \C^{2}$. 
These groups act on the slices by $(k,l).z_{1}= 
\expo{qt}z_{1}=\expo{\frac{qk-l}{p}}z_{1}$ and 
$(k,l).z_{2}= \expo{(pt+u)} z_{2}= \expo{\frac{pk+l}{q}} z_{2}$. 
The leaf space is therefore an orbifold with singularities at the poles of order $p,q\in \Z_{\geq 1}$.
This is similar to \cite[Ex. 5.3]{MV}, 
although our construction does not involve the choice of a K\"ahler class 
(we give an interpretation of this extra piece of information in \ref{Example height fct}). 
In conclusion, in the rational case, one can prescribe at the poles orbifold singularities of arbitrary order
$p,q$, with $p,q\in \Z_{\geq 1}$.
\rev{
Referring to Rem.~\ref{downstairs foliation}, consider the case $\gcd(p,q)=e>1$ and
let $a,b\in\Z$ such that $ap+bq=e$. Then  
the leaf space is $X=U\rev{'}(\Delta)/G$, where $U\rev{'}(\Delta)=\C^2\setminus\{0\}$ and $G=\C\times\frac{\Z}{e\Z}$
acts on $U\rev{'}(\Delta)$ by 
$(t,[n]).(z_{1},z_{2})=(e^{2\pi i ( q t+\frac{a}{e}n) } z_1, e^{2\pi i (pt-\frac{b}{e}n)} z_{2}).$
}

\subsection{The nonrational $\C\P^{1}$}\label{Nonrational CP1}\mbox{} 
By a suitable choice of a vector configuration \rev{---in particular of the ghosts vectors---} one can prescribe 
an {\em arbitrary} finitely generated subgroup $A$ of the circle as local group at both poles
of the corresponding toric quasifold. 
Assume without loss of generality that $A$ is generated by 
$\expo{r_{1}},\dots,\expo{r_{2m-1}}$ with $r_j\in\R$.
Let $V=(1,-1,r_{1},\dots, r_{2m-1},-r_{1}-\dots-r_{2m-1})$. Then the quasilattice 
$Q$ is $\textrm{Span}_{\Z}\{1,r_{1},\dots, r_{2m-1}\}$. 
\rev{Keep $\bT$ as above, so $v_{3}\dots v_{2m+2}$ are ghosts}. 
Take $$M=\begin{bmatrix} 
1& 1&-r_{1}&\dots&\dots &-r_{2m-1}\\
1&1 &0&\dots&\dots&0\\
1&0&1&0&\dots&0\\
\vdots&\vdots&\ddots&\ddots&\ddots&\vdots\\
\vdots&\vdots&\vdots&\ddots&\ddots&0\\
1&0&0&\dots&0&1\\
1&0&0&\dots&\dots&0\\
 \end{bmatrix}\text{,}$$
so $\Lambda^{\R}=
\Big(\begin{bmatrix}1\\-r_{1}\\ \vdots \\ -r_{2m-1}  \end{bmatrix}, e_{1}, \dots, e_{2m}, 0  \Big)$, 
where  $e_{1}, \dots, e_{2m}$ is the canonical basis of $\R^{2m}$.
The slice $\C\hookrightarrow U_{1}, 
z_{1}\mapsto[z_{1}:1:\dots:1]$ is stabilized by $\Z^{2m-1}\subset\C^{2m}$, 
which acts  by 
$\underline{h}.z_{1}= \expo{(r_{1}h_{2}+\cdots+r_{2m-1}h_{2m})}z_{1}$.
At the other pole the local group is also $A$, which acts on the corresponding slice in the same way.

\subsection{Stanley's proof in our setting}\label{Stanley}

\noindent

\noindent Let $\Delta$ be a polytopal simplicial fan. 
Denote by $(h_{0},\dots, h_{d})$ its $h$-vector (\cf Sect. \ref{shellings and h}). 
Define its {\em $g$-vector} $(g_1,\ldots,g_{\delta})$, 
with $\delta=[\frac{d}{2}]$, by $g_j=h_j-h_{j-1}$, $j=1\ldots\delta$. 
Choose a triangulated vector configuration $(V,\bT)$ whose associated fan is $\Delta$, 
and a corresponding $(N,\calF)$ (\cf Sect. \ref{construction of N}). 

In close analogy to \cite{S}, we show below that the combinatorial properties that characterize 
the $g$-vector of a simple polytope have a direct interpretation, and proof, 
in terms of the basic cohomology of $(N,\calF)$.

We first remark that by Th.~\ref{prop betti numbers}, $\calF$ is homologically 
orientable, i.e.
\begin{equation}\label{hom orient}
H_{\calF}^{2d}(N)\neq 0\text{,}
\end{equation}
which is equivalent to either $H_{\calF}^{2d}(N)=1$, or 
to Poincar\'e duality for the basic cohomology of $(N,\calF)$ \cite{elkacimihector,haefliger,sergiescu}.

\subsubsection{Dehn-Sommerville equations} 
By our computation of the basic Betti numbers in Th.~\ref{prop betti numbers},
the Dehn-Sommerville equations
$$h_{d-j}=h_{j}\quad\text{for all } j$$
are only a restatement of basic Poincar\'e duality.

As is well-known, these relations follow more simply by computing the $h_{i}$'s  
using a shelling and the reverse shelling \cite[8.21]{ziegler}.

\subsubsection{Nonnegativity of the $g$-vector}\label{positivity}
\rev{The foliation $\calF$ is transversely K\"ahler by Loeb-Nicolau (for $m=1$, in \cite{LN}) and 
Meersseman (for $m\geq 1$, in \cite{M}). By (\ref{hom orient}) and
\cite[3.4.7]{EK}, $H^{\bullet}_{\calF}(N)$ has the hard Lefschetz property. 
In particular there exists an injective map 
$L:H_{\calF}^{2j-2}(N) \to H_{\calF}^{2j}(N)$ for all $j\leq \delta$.
Therefore $b_{\calF}^{2j}(N)-b_{\calF}^{2j-2}(N)\geq0$. By Th.~\ref{prop betti numbers}, $g_j=h_j-h_{j-1}\geq0$ 
for $j=1\ldots\delta$.
}
\subsubsection{Bound on growth of $g_j$}
\noindent
The usual numerical condition (\cf \cite[p.127 II(b)]{Ful}) is equivalent to the existence of a graded commutative 
algebra $R=R_{0}\oplus R_{1}\oplus \dots \oplus R_{\delta}$ over the field $K=R_{0}$, generated by $R_{1}$, and such 
that $g_{j}=\dim R_{j}$ for $j=1\dots\delta$. 
We take $R_{j}:=H^{2j}_{\calF}(N)/ L(H^{2j-2}_{\calF})$. The result follows from the hard Lefschetz property and Th.~\ref{generated in dim two}.

\rev{\begin{rem}{\rm This shows that at least part of the technology available to toric varieties survives to the nonrational case. However, in order to prove Stanley's result it is possible to  bypass toric geometry:
we refer the reader to \cite{FK} for the latest in a series of results initiated by McMullen \cite{McM} 
and continued by Timorin \cite{Tim} and others.
}
\end{rem}}

\subsection{Brief account of the polytopal case} \label{Reg Triang and Kahler}
\subsubsection{Preliminaries}\label{preliminaries}
An important special case is when $\bT$ is a {\em regular triangulation}. 
Regularity has several characterisations: 
\begin{enumerate}
\item[\phantom{$\Leftrightarrow$}\quad 1.] The triangulation is {\em regular}
\item[$\Leftrightarrow$\quad  2.]  The fan $\Delta$ is {\em polytopal}
\item[$\Leftrightarrow$\quad  3.] There exists a {\em height function} on $V$ that induces $\bT$
\item[$\Leftrightarrow$\quad 4.] The virtual chamber defines a nonempty {\em chamber}, i.e.,  
$\bigcap_{\alpha} \mathring{P}_\alpha \neq\varnothing$  (\cf \ref{Proof that Bosio's conditions hold})
\item[$\Leftrightarrow$\quad 5.] 
There exists $\nu\in \R^{2m}$  s.t. $\forall \tau\subset \{1\dots n\}, \tau\in \bT$ if and only if $\nu$ is 
in the interior of the convex hull of  $\setofst{\Lambda^{\R}_{j}}{j\in\tau^{c}}$
\end{enumerate}
The last condition implies that, by definition, the corresponding manifold $N$ is an LVM-manifold \cite{M}.
This in turn implies  that the foliation $\calF$ is transversely K\"ahler by 
\cite{LN} (for $m=1$) and \cite{M} (for $m\geq 2$).

\subsubsection*{Correspondence between regular triangulations and chambers} 

Regular triangulations of $V$ are in one-to-one correspondence with {\em chambers} of $\Lambda^{\R}$, i.e., 
bounded connected components of $\R^{2m}-L$, where $L$ is the union of all affine 
$2m-1$-planes determined by $\Lambda^{\R}_{1},\dots,\Lambda^{\R}_{n}$. 
Explicitly: from $\bT$ we obtain the chamber $\bigcap_{\alpha} \mathring{P}_\alpha$; from a chamber $C$, we define $\bT$ by $\tau\in \bT \Leftrightarrow$ the convex hull of $\setofst{\Lambda^{\R}_{j}}{j\in\tau^{c}}$ contains $C$.  

\subsubsection*{Correspondence between height functions and points in a chamber} 
A triangulation is {\em regular} when there exists a so-called {\em height function} 
$\omega=(\omega_{1},\dots,\omega_{n})\in \R^{n}$ such that: 
there is a (necessarily unique) convex function $\psi_{\omega}:\R^{d}\to \R$, restricting to pairwise distinct linear forms on the maximal cones of $\Delta$, 
such that $\psi_{\omega}(v_{i})=\omega_{i}$ for each non ghost vector $v_{i}$ 
and $\psi_{\omega}(v_{i})<\omega_{i}$ for each ghost vector $v_{i}$. 

By a result of Carl Lee \cite[Lemma 5.4.4]{DL-R-S}, $\omega$ induces a triangulation 
$\bT$ if and only if 
$\nu := \frac{1}{\sum \omega_{i}}\sum \omega_{i} {\Lambda}_{i}^{\R}$
belongs to the chamber associated to $\bT$ as above.
Therefore, conversely, starting from a point in a chamber written as a convex linear combination $\sum \omega_{i} \Lambda_{i}^{\R}$, we obtain a height function $\omega$ inducing a regular triangulation.

\medskip \noindent
{\em Conclusion: the map $\omega \mapsto \nu$
gives a quantitative refinement of the qualitative 
correspondence between regular triangulations and chambers described above.}

\subsubsection{Example}\label{Example height fct}
We choose the same data as in \ref{arbitrary p q} (but here $p,q$ can be any positive reals): 
$V=(p,-q,1,q-p-1)$ and $\Lambda^{\R}=\begin{bmatrix} q&p&0&0\\0&1&q&0\\ \end{bmatrix}$ 
\begin{center}
\def\svgwidth{0.9\textwidth}
\begingroup%
  \makeatletter%
  \providecommand\color[2][]{%
    \errmessage{(Inkscape) Color is used for the text in Inkscape, but the package 'color.sty' is not loaded}%
    \renewcommand\color[2][]{}%
  }%
  \providecommand\transparent[1]{%
    \errmessage{(Inkscape) Transparency is used (non-zero) for the text in Inkscape, but the package 'transparent.sty' is not loaded}%
    \renewcommand\transparent[1]{}%
  }%
  \providecommand\rotatebox[2]{#2}%
  \ifx\svgwidth\undefined%
    \setlength{\unitlength}{551.92998047bp}%
    \ifx\svgscale\undefined%
      \relax%
    \else%
      \setlength{\unitlength}{\unitlength * \real{\svgscale}}%
    \fi%
  \else%
    \setlength{\unitlength}{\svgwidth}%
  \fi%
  \global\let\svgwidth\undefined%
  \global\let\svgscale\undefined%
  \makeatother%
  \begin{picture}(1,0.23279946)%
    \put(0,0){\includegraphics[width=\unitlength]{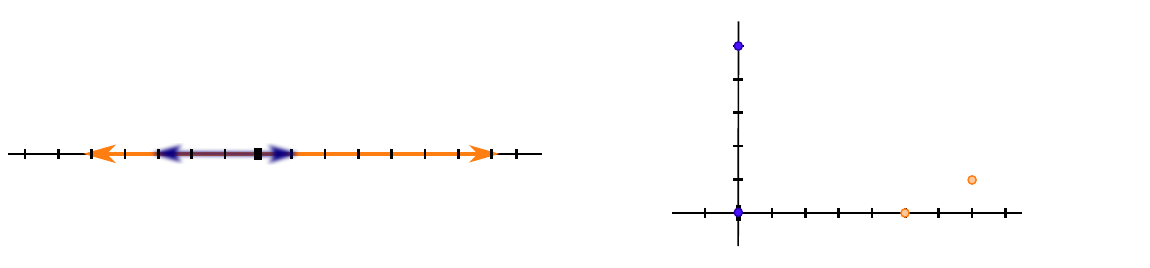}}%
    \put(0.16004953,0.07996065){\color[rgb]{0,0,0}\makebox(0,0)[lb]{\smash{
}}}%
    \put(0.41545673,0.07168279){\color[rgb]{0,0,0}\makebox(0,0)[lb]{\smash{$v_1$}}}%
    \put(0.24446314,0.07168279){\color[rgb]{0,0,0}\makebox(0,0)[lb]{\smash{$v_3$}}}%
    \put(0.12831655,0.07168279){\color[rgb]{0,0,0}\makebox(0,0)[lb]{\smash{$v_4$}}}%
    \put(0.0710498,0.07168279){\color[rgb]{0,0,0}\makebox(0,0)[lb]{\smash{$v_2$}}}%
    \put(0.65063716,0.20389326){\color[rgb]{0,0,0}\makebox(0,0)[lb]{\smash{$\Lambda_{3}^{\R}$}}}%
    \put(0.80676047,0.09395398){\color[rgb]{0,0,0}\makebox(0,0)[lb]{\smash{$\Lambda_{2}^{\R}$}}}%
    \put(0.7892708,0.0120805){\color[rgb]{0,0,0}\makebox(0,0)[lb]{\smash{$\Lambda_{1}^{\R}$}}}%
    \put(0.64854391,0.0120805){\color[rgb]{0,0,0}\makebox(0,0)[lb]{\smash{$\Lambda_{4}^{\R}$}}}%
    \put(0.21892925,0.11920123){\color[rgb]{0,0,0}\makebox(0,0)[lb]{\smash{$0$}}}%
  \end{picture}%
\endgroup%

 \end{center}
To induce the triangulation $\bT=\{\E_{1}=\{1\},\E_{2}=\{2\},\varnothing\}$, we can choose, for example,  
$\omega_{1}=p$ , $\omega_{2}=q$ (so $\psi_{\omega}=|.|$),
$\omega_{3}>1$ and $\omega_{4}>|q-p-1|$:
$$\includegraphics[width=0.5\textwidth]{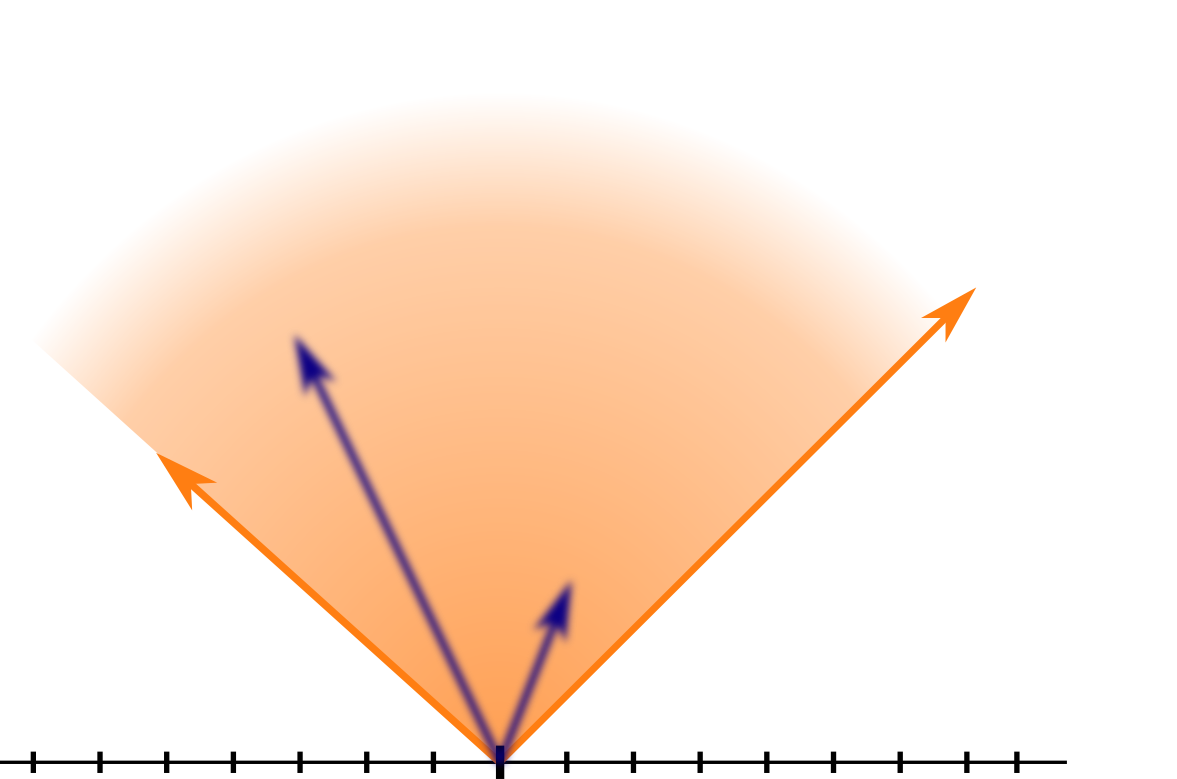}$$
which in turns gives the point $\nu= 
\frac{1}{\omega_{1}+\omega_{2}+\omega_{3}+\omega_{4}} 
(\omega_{1}\Lambda_{1}^{\R}+\omega_{2} \Lambda_{2}^{\R}+\omega_{3} 
\Lambda_{3}^{\R}+\omega_{4} \Lambda_{4}^{\R})$ contained in one of the four chambers of the configuration $\Lambda^{\R}$:
\begin{center}
\def\svgwidth{0.5\textwidth}
\begingroup%
  \makeatletter%
  \providecommand\color[2][]{%
    \errmessage{(Inkscape) Color is used for the text in Inkscape, but the package 'color.sty' is not loaded}%
    \renewcommand\color[2][]{}%
  }%
  \providecommand\transparent[1]{%
    \errmessage{(Inkscape) Transparency is used (non-zero) for the text in Inkscape, but the package 'transparent.sty' is not loaded}%
    \renewcommand\transparent[1]{}%
  }%
  \providecommand\rotatebox[2]{#2}%
  \ifx\svgwidth\undefined%
    \setlength{\unitlength}{184.59343262bp}%
    \ifx\svgscale\undefined%
      \relax%
    \else%
      \setlength{\unitlength}{\unitlength * \real{\svgscale}}%
    \fi%
  \else%
    \setlength{\unitlength}{\svgwidth}%
  \fi%
  \global\let\svgwidth\undefined%
  \global\let\svgscale\undefined%
  \makeatother%
  \begin{picture}(1,0.53835078)%
    \put(0,0){\includegraphics[width=\unitlength]{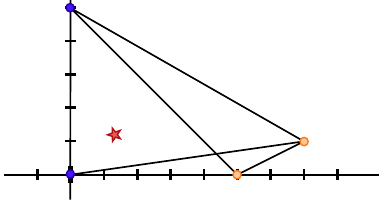}}%
    \put(0.25187652,0.21132111){\color[rgb]{0,0,0}\makebox(0,0)[lb]{\smash{$\nu$}}}%
  \end{picture}%
\endgroup%

 \end{center}
Using \cite{M}, this point can be used to give a {\em $\calC^{\infty}$  embedding} $N\hookrightarrow\C\P^{n-1}$ as 
$$\mathcal{N}=\bigg\{[\vz]\in\C\P^{n-1}\ \bigg\vert
\ \sum_{j=1\dots n} (\Lambda_{j}^{\R} -\nu)\, |z_{j}|^{2}=0\bigg\}.$$
This solves a problem mentioned in \cite[Rem. 4.11]{MV}.
Pulling-back the Fubini-Study K\"ahler form by this embedding endows $N$ with a $2$-form $\varphi$ 
transversely K\"ahler with respect to $\calF$ \cite{MV}.
The form $\varphi$ defines on $X$ a K\"ahler form, whose moment polytope is 
$[-\frac{1}{p+q+\omega_3+\omega_4},\frac{1}{p+q+\omega_3+\omega_4}]$.

\bigskip

\section*{Concluding remarks}
\begin{itemize}
\item Using the above notation, notice that we can also interpret the choice of $\nu$ on the boundary of a chamber: this corresponds to a non simplicial polyhedral decomposition of the vector configuration $V$ (\cf \cite{DL-R-S}). We intend to use our methods to investigate this case. 
\item Starting from a rational triangulated vector configuration, we can rescale each vector to obtain 
a nonrational configuration, which is in a sense ``weakly'' nonrational (the fan is still rational). It would be interesting to characterize geometrically the class of foliated manifolds obtained from such configurations. 
\item In order to prove Th. \ref{prop betti numbers} we assume that our simplicial fan is shellable. It is an open problem to decide if all simplicial fans are shellable (specialists we consulted lean toward a negative answer). It seems possible to prove Th. \ref{prop betti numbers} in general by constructing an ad hoc spectral sequence. 
\new{\item We expect the basic cohomology ring to have a similar description to the real cohomology ring
of simplicial toric varieties.} 
\item We conjecture that the basic Hodge numbers of these foliations are concentrated on the diagonal.
\item Using the result of Th. \ref{prop betti numbers}, we hope to be able to prove the following result 
(conjectured in \cite{CZ}): an LVMB-manifold is an LVM-manifold if and only if the foliation $\calF$ is transversely K\"ahler.
\end{itemize}

\bigskip

{\small 

Fiammetta Battaglia\\
Universit\`a di Firenze, Dipartimento di Matematica e Informatica U. Dini, 
Via S. Marta 3, 50139 Firenze, Italy.

\noindent
e-mail: fiammetta.battaglia@unifi.it}

\medskip

{\small 

Dan Zaffran\\
Florida Institute of Technology, Dept. of Mathematical Sciences,150 W. University Blvd.
Melbourne, FL 32901, U.S.A.\\
Korea Advanced Institute of Science and Technology, 291 Daehak-ro, Yuseong-gu, Daejeon 305-338, South Korea.

\noindent
e-mail: dzaffran@fit.edu}

\enlargethispage{10mm}


\begin{thebibliography}{AAAAA}
\bibitem{A} M. Audin. \textit{The topology of torus actions on symplectic manifolds}. Progress in Mathematics, 93. Birkh\"auser Verlag, (1991).
\bibitem{AS} M. Azaola and F. Santos. ``The Number of Triangulations of the Cyclic Polytope $C(n,n-4)^{*}$.'' \textit{Discrete Comput. Geom.} 27, no. 1 (2002): 29--48.
\bibitem{Bat} F. Battaglia. ``Betti numbers of the geometric spaces associated to nonrational simple convex polytopes.''  \textit{Proc. Amer. Math. Soc.} 139 (2011): 2309--2315.
\bibitem{cx} F. Battaglia, E. Prato. ``Generalized toric varieties for
simple nonrational convex polytopes.''  \textit{Intern. Math. Res.
Notices} 24 (2001): 1315--1337.
\bibitem{BO} L. Battisti, K. Oeljeklaus. ``A generalization of Sankaran and LVMB manifolds.''  \textit{Mich. Math. J.} (2015), to appear.
\bibitem{BH} F. Berchtold, J. Hausen. ``Bunches of cones in the divisor class group -- A new combinatorial language for toric varieties.'' \textit{Int. Math. Res. Not.} (2004) no.6:  261--302.
\bibitem{Bos} F. Bosio. ``Vari\'et\'es complexes compactes : une g\'en\'eralisation de la construction
de Meersseman et de L\'opez de Medrano-Verjovsky.'' \textit{Ann. Inst. Fourier} {51}, no. 5 (2001): 1259--1297.
\bibitem{BCS} L. Borisov, L. Chen, G. Smith. ``The orbifold Chow ring of toric Deligne-Mumford stacks.''
 \textit{J. Amer. Math. Soc.} 18 (2005): 193--215. 
\bibitem{bott-tu} R. Bott, L. Tu. \textit{Differential Forms in Algebraic Topology.}
 Graduate Text in Math. 82. Springer, (1982).
\bibitem{cox} D. Cox. ``The homogeneous coordinate ring of a toric variety.''
\textit{J. Algebraic Geom.} 4, no. 1 (1995): 17--50.
\bibitem{CZ} S. Cupit, D. Zaffran. ``Non-K\"ahler manifolds and GIT-quotients.'' 
\textit{Math. Z.} 257, no. 4 (2007): 783--797. 
\bibitem{DJ} M. Davis, T. Januszkiewicz. ``Convex polytopes, Coxeter orbifolds and torus actions.''
\textit{Duke Math. J.} 62, no. 2 (1991): 417--451.  
\bibitem{DL-R-S} J. A. De Loera, J. Rambau, F. Santos. 
\textit{Triangulations: Structures for Algorithms and Applications.} 
Algorithms and Computation in Mathematics, Vol. 25, Springer-Verlag, (2010).
\bibitem{D} T. Delzant. ``Hamiltoniens periodiques et images convexes de l'application moment.'' \textit{Bull. Soc. Math. France} 116 (1988): 315--339.
\bibitem{Da} V. Danilov. ``The geometry of toric varieties.'' \textit{Russian Math. Surveys} 33 (1978): 97--154.
\bibitem{EK} A. El Kacimi Alaoui. ``Op\'erateurs transversalement elliptiques sur un 
feuilletage riemannien et applications.'' \textit{Compositio Mathematica} 73 (1990): 57--106.
\bibitem{elkacimihector} A. El Kacimi Alaoui, G. Hector. ``D\'ecomposition de Hodge basique
pour un feuilletage riemannien.'' \textit{Ann. Inst. Fourier} 36, no. 2 (1986): 207--227.
\bibitem{FK} B. Fleming, K. Karu. ``Hard Lefschetz theorem for simple polytopes.'' \textit{J. Algebraic Combin.} 32, no. 2 (2010): 227--239.
\bibitem{Ful} W. Fulton. \textit{Introduction to toric varieties.} Princeton University Press, (1993). 
\bibitem{GT} O. Goertsches, D. T\"oben. ``Equivariant Basic Cohomology of Riemannian Foliations.'' (2010) arXiv:1004.1043v1 [math.DG].
\bibitem{gmp} M. Goresky, R.MacPherson. \textit{Stratified Morse Theory.} Springer Verlag, New York, (1988).
\bibitem{Gr} M. Gromov. ``Convex sets and K\"ahler Manifolds.'' Advances in Differential Geometry and topology, World Scientific, (1990): 1--38.
\bibitem{haefliger} A. Haefliger. ``Pseudogroups of local isometries.'' Proceed. $\text{V}^{\text{th}}$ Coll. in Differential Geometry, ed. L. A. Cordero, Research notes in Math. 131, Pitman (1985): 174--197. 
\bibitem{IZ} P. Iglesias-Zemmour. \textit{Diffeology.} Mathematical Surveys and Monographs 185 (2013).
\bibitem{ishida} H. Ishida. ``Complex manifolds with maximal torus actions.'' (2013), arXiv:1302.0633
\bibitem{I} I. Iwanari. ``The category of toric stacks.'' \textit{Compositio Math.} {145} (2009): 718--746.
\bibitem{karu} K. Karu. ``Hard Lefschetz theorem for nonrational polytopes.'' \textit{Invent. Math.}  157, no. 2  (2004): 419--447.
\bibitem{KLMV} L. Katzarkov, E. Lupercio, L. Meersseman, A. Verjovsky. ``The definition of a non-commutative
toric variety.'' \textit{Contemporary Mathematics} 620 (2014): 223--250.
\bibitem{Kho} A.G. Khovanskii. ``The geometry of convex polyhedra and algebraic geometry.'' \textit{Uspehi Mat. Nauk.} 34, no. 4 (1979): 160--161 (Russian).
\bibitem{lt} E. Lerman, S. Tolman. ``Hamiltonian torus actions on symplectic orbifolds and toric varieties.'' \textit{Trans. A.M.S.} {349}, no. 10 (1997): 4201--4230.
\bibitem{LN} J. Loeb, M. Nicolau.  ``On the complex geometry of a class of non K\"ahlerian manifolds.'' \textit{Israel J. Math.}'' {110} (1999): 371--379.
\bibitem{ldm} S. L\'opez de Medrano, A. Verjovsky. ``A new family of complex, compact, non symplectic manifolds.'' \textit{Bol. Soc. Brasil. Mat.} {28} (1997): 253--269.
\bibitem{McM} P. McMullen. ``On simple polytopes.'' \textit{Invent. Math.} 113 (1993): 419--444.
\bibitem{M} L. Meersseman. ``A new geometric construction of compact complex manifolds in any dimension.'' 
\textit{Math. Ann.} {317} (2000): 79--115.
\bibitem{MV} L. Meerssemann, A. Verjovsky. ``Holomorphic principal bundles over projective toric varieties.''
\textit{J. Reine Angew. Math.} {572} (2004): 57--96. 
\bibitem{molino} P. Molino. \textit{Riemannian Foliations.} Progress in Mathematics 73, Birkh\"auser (1988).
\bibitem{moz} W. Mozgava. ``Feuilletages de Killing.'' \textit{Collect. Math.} 36, no. 3 (1985): 285-290.
\bibitem{No} H. Nozawa. ``Continuity of the \'Alvarez class under deformations.'' \textit{J. Reine Angew. Math.} 673 (2012): 125--159.
\bibitem{PU} T. Panov, Y. Ustinovsky. ``Complex-analytic structures on moment-angle manifolds.'' \textit{Moscow Math. J.} 12, no. 1 (2012): 149--172. 
\bibitem{p} E. Prato. ``Simple non-rational convex polytopes via symplectic geometry.'' \textit{Topology} 40 (2001): 961--975.
\bibitem{sergiescu} V. Sergiescu. ``Cohomologie basique et dualit\'e des feuilletages riemanniens.'' \textit{Ann. Inst.
Fourier} 35, no. 3 (1985): 137--158.
\bibitem{S}  R. Stanley. ``The number of faces of a simplicial convex polytope.'' \textit{Adv. in Math.} 35, no. 3 (1980): 236--238.
\bibitem{Tam} J. Tambour. ``LVMB manifolds and simplicial spheres.'' \textit{Ann. Inst. Fourier} 62, No. 4 (2012): 1289--1317.
\bibitem{Tei} B. Teissier. ``Bonnesen type inequalities in algebraic geometry.'' \textit{ Annals of Math. Studies} 102, Princeton U.P. (1982): 85--105.
\bibitem{Tim} V. A. Timorin. ``An analogue of the Hodge--Riemann relations for simple convex polytopes.'' \textit{Russian Math. Surveys} 54, no. 2 (1999): 381--426.
\bibitem{toben} D. T\"oben. ``Localization of Basic Characteristic Classes.'' \textit{Ann. Inst. Fourier}, 64,
no. 2 (2014): 537--570.
\bibitem{Ton} P. Tondeur. \textit{Geometry of foliations.} Monographs in Mathematics, 90. Birkh\"auser Verlag, Basel, (1997).
\bibitem{U} Y. Ustinovsky. ''Geometry of compact complex manifolds with maximal torus action.'' \textit{Proc. Steklov Inst. Math.} 286 (2014): 198--208.
\bibitem{WZ} Wang Z. Z., D. Zaffran. ``A Remark on the Hard Lefschetz Theorem for K\"ahler orbifolds.'' \textit{Proc. Amer. Math. Soc.} 139 (2009): 2497--2501.
\bibitem{ziegler} G. Ziegler. \textit{Lectures on polytopes.} Graduate Texts in Mathematics, Vol. 152, Springer-Verlag (1995).

\end{thebibliography}
\end{document}